\DeclareMathOperator\Z{\mathbb Z}
\newcommand{\Om}{\Omega}
\newcommand{\G}{{\mathbb G}}
\newtheorem{theorem}{Theorem}[section]
\newtheorem{lemma}[theorem]{Lemma}
\newtheorem{cor}[theorem]{Corollary}
\newtheorem{prop}[theorem]{Proposition}
\theoremstyle{definition}
\newtheorem{definition}[theorem]{Definition}
\theoremstyle{remark}
\newtheorem{remark}[theorem]{Remark}
\newcommand{\dontprint}[1]\relax
\renewcommand{\H}{\mathbb{H}}
\newcommand{\Proj}{\operatorname{Proj}}
\renewcommand{\P}{{\mathbb P}}
\newcommand{\A}{{\mathbb A}}
\newcommand{\wt}{\widetilde}
\newcommand{\ot}{\otimes}
\newcommand{\CC}{{\mathcal C}}
\newcommand{\MM}{{\mathcal M}}
\newcommand{\OO}{{\mathcal O}}
\newcommand{\UU}{{\mathcal U}}
\newcommand{\VV}{{\mathcal V}}
\newcommand{\de}{\delta}
\newcommand{\sub}{\subset}
\newcommand{\Spec}{\operatorname{Spec}}
\newcommand{\lan}{\langle}
\newcommand{\ran}{\rangle}
\newcommand{\ov}{\overline}
\newcommand{\om}{\omega}
\newcommand{\la}{\lambda}
\renewcommand{\a}{\alpha}
\newcommand{\hra}{\hookrightarrow}
\renewcommand{\k}{\mathbf{k}}
\newcommand{\sspan}{\operatorname{span}}
\numberwithin{equation}{section}
\title{Algebra of global sections of $\psi$-bundles on $\ov{M}_{0,n}$}
\author{Alexander Polishchuk}
\address{
    Department of Mathematics, 
    University of Oregon, 
    Eugene, OR 97403, USA; and National Research University Higher School of Economics, Moscow, Russia
  }
  \email{apolish@uoregon.edu}
\author{Eric Rains}
\address{Department of Mathematics, California Institute of Technology, Pasadena CA 91125}
\email{rains@caltech.edu}
\begin{document}

\begin{abstract} We consider the $\Z^n$-graded algebra of global sections of line bundles generated by the standard line bundles $L_1,\ldots,L_n$ on
$\ov{M}_{0,n}$. We find a simple presentation of this algebra by generators and quadratic relations. As an application we prove that the moduli space $\ov{M}_{0,n}[\psi]$
of $\psi$-stable curves of genus $0$ is Cohen-Macaulay and normal, and the natural map $\ov{M}_{0,n}\to \ov{M}_{0,n}[\psi]$ is a rational resolution.
\end{abstract}

\maketitle

\section{Introduction}

We work over an arbitrary ground field $\k$.

Let $\ov{M}_{0,n}$ denote the moduli space of stable curves of genus $0$ with $n$ marked points (where $n\ge 3$), and let
$L_1,\ldots,L_n$ be the standard line bundles on $\ov{M}_{0,n}$ associated with markings ($L_i$ is the cotangent line at the $i$th marking; the first Chern class of $L_i$
is usually denoted as $\psi_i$).
We are interested in the $\Z^n$-graded algebra
$$A_n:=\bigoplus_{a_1,\ldots,a_n\ge 0}H^0(\ov{M}_{0,n},L_1^{a_1}\ot\ldots\ot L_n^{a_n}),$$
where $H^0(L_i)$ has degree $e_i\in \Z^n$.
Our main result is an explicit description of this algebra by generators and relations.

For a finite set $S$ we denote by $V_S$ the subspace in $\bigoplus_{s\in S} \k\cdot v_s$ consisting of the vectors with the zero sum of coordinates.
For each $1\le i\le n$, we have a well known natural identification
\begin{equation}\label{H0-Li-eq}
H^0(\ov{M}_{0,n},L_i)\simeq V^{(i)}:=V_{[1,n]\setminus\{i\}}.
\end{equation}
For each pair $i<j$ from $[1,n]$ we consider the $(n-3)$-dimensional subspace of $V^{(i)}\ot V^{(j)}$ given by 
$$R^{(i,j)}:=\sspan \left((v^{(i)}_k-v^{(i)}_j)\ot (v^{(j)}_k-v^{(j)}_i)- (v^{(i)}_l-v^{(i)}_j)\ot (v^{(j)}_l-v^{(j)}_i) \ |\ k,l\in [1,n]\setminus \{i,j\}\right).$$
where $(v^{(i)}_j \ |\ j\neq i)$ is a basis of $V^{(i)}$.

\bigskip

\noindent
{\bf Theorem A}. {\it The isomorphisms \eqref{H0-Li-eq} induce an isomorphism of $\Z^n$-graded algebras
\begin{equation}\label{main-isom-eq}
S^\bullet(V^{(1)})\ot\ldots\ot S^\bullet(V^{(n)})/(\sum_{i<j} R^{(i,j)})\rTo{\sim} A_n,
\end{equation}
where $V^{(i)}$ has degree $e_i$.
}

\bigskip

With respect to the total $\Z$-grading, the isomorphism \eqref{main-isom-eq} gives a presentation of $A_n$ by generators of degree $1$ with defining
quadratic relations. We conjecture that the algebra $A_n$ is Koszul. Note however, that the syzyges of $A_n$ as a module over the symmetric algebra
have a more complicated behavior than in the case of the log-canonical ring considered by Keel and Tevelev \cite{KT}.

Recall that Y.-P.~Lee \cite{Lee} gave the following formula for the $\Z^n$-graded Hilbert polynomial of $A_n$:
\begin{equation}\label{Lee-formula}
h_{A_n}(q_1,\ldots,q_n)=\frac{(1+q_1(1-q_1)^{-1}+\ldots+q_n(1-q_n)^{-1})^{n-3}}{(1-q_1)\ldots (1-q_n)}.
\end{equation}

The affine scheme $\Spec(A_n)$ has the following modular interpretation: it is the moduli space $\wt{\UU}_{0,n}[\psi]$ 
of $\psi$-prestable $n$-pointed curves of arithmetic genus $0$ with nonzero tangent vectors $v_i$ at the marked points (see \cite[Sec.\ 19]{P-lectures}).
Recall that this means that $C$ is a reduced connected curve of arithmetic genus $0$, the marked points $p_i$ are smooth and distinct, and 
the line bundle $\OO_C(p_1+\ldots+p_n)$ is ample. Such curves are known to have at most rational $m$-fold points as singularities, and
in fact, $\wt{\UU}_{0,n}[\psi]$ is the base of the miniversal deformation of the rational $n$-fold point singularity.

\bigskip

\noindent
{\bf Theorem B}. {\it The scheme $\Spec(A_n)\simeq \wt{\UU}_{0,n}[\psi]$ is Cohen-Macaulay, normal, smooth in codimension $\le 4$.}

\bigskip


There is a natural action of $\G_m^n$ on $\wt{\UU}_{0,n}[\psi]$, so that a GIT quotient stack gives
the moduli space $\ov{M}_{0,n}[\psi]$ of $\psi$-stable (aka Boggi stable) pointed curves, i.e., $\psi$-prestable pointed curves with trivial automorphisms
(see \cite[Sec.\ 19.3]{P-lectures}, \cite[Sec.\ 2.3]{FS}).
Theorem B implies that this space is Cohen-Macaulay and normal. 
Normality of $\ov{M}_{0,n}[\psi]$ was claimed by Boggi in \cite{Boggi},
however the argument in the proof of \cite[Thm.\ 1.1]{Boggi} is not correct.\footnote{The mistake is in claiming that the blow-up of any ideal on
a smooth variety is normal. In fact, the argument in \cite{Boggi} does not even show that $\ov{M}_{0,n}[\psi]$ is reduced.}
In fact, we prove that the natural projection $\ov{M}_{0,n}\to \ov{M}_{0,n}[\psi]$ is a rational resolution of singularities, and prove a similar result for
the map from $\ov{M}_{0,n}$ to its image in $(\P^{n-3})^m$ for $1\le m\le n$, under the morphism given by the linear systems $|L_1|,\ldots,|L_m|$
(see Sec.\ \ref{rat-res-sec}).
 
\bigskip

\noindent
{\it Acknowledgments}. This material is based upon work supported by the National Science Foundation under grant No. DMS-1928930 and by the Alfred P. Sloan Foundation under grant G-2021-16778, while both authors were in residence at the Simons Laufer Mathematical Sciences Institute (formerly MSRI) in Berkeley, California, during the Spring 2024 semester. In addition, A.P. is partially supported by the NSF grants DMS-2001224, DMS-2349388, by the Simons Travel grant MPS-TSM-00002745,
and within the framework of the HSE University Basic Research Program.

\section{$\psi$-prestable curves of genus $0$}\label{psi-curves-sec}

Let $\UU_{0,n}[\psi]$ denote the moduli stack of $\psi$-prestable curves, and let $\wt{\UU}_{0,n}[\psi]\to \UU_{0,n}[\psi]$ be the natural $\G_m^n$-torsor
corresponding to choices of nonzero tangent vectors at the marked points (see \cite[Sec.\ 19]{P-lectures}).
We denote by
$\wt{\CC}_n\to \wt{\UU}_{0,n}[\psi]$ are the universal affine curve (obtained by deleting the marked points).

The spaces $\wt{\UU}_{0,n}[\psi]$ and $\wt{\CC}_n$ are affine schemes which have the following explicit description (see \cite[Sec.\ 1]{P-g1}, \cite[Sec.\ 19]{P-lectures}).
Given a point $(C,p_1,\ldots,p_n,v_1,\ldots,v_n)$ of $\wt{\UU}_{0,n}$, the algebra of functions on $C-\{p_1,\ldots,p_n\}$ is generated
 by $f_1,\ldots,f_n$, where $f_i$ has no poles at $p_j$ for $j\neq i$ and has form $f_i=1/t_i+O(1)$ for some formal parameter $t_i$ at $p_i$ satisfying $v_i(t_i)=1$.
These functions $f_i$ are defined uniquely up to an additive constant and the algebra of functions on $C-\{p_1,\ldots,p_n\}$ has
defining relations over
$\OO(\wt{\UU}_{0,n}[\psi])$ of the form
\begin{equation}\label{fi-fj-eq}
f_if_j=\wt{\a}_{ij}f_j+\wt{\a}_{ji}f_i+c_{ij}, \ \text{ for } i\neq j,
\end{equation}
where
\begin{equation}\label{cij-eq}
c_{ij}=\wt{\a}_{ik}\wt{\a}_{jk}-\wt{\a}_{ij}\wt{\a}_{jk}-\wt{\a}_{ik}\wt{\a}_{ji}
\end{equation}
for any triple of distinct indices $i,j,k$. 
The change $f_i\mapsto f_i+c_i$ leads to the change $\wt{\a}_{ij}\mapsto \wt{\a}_{ij}+c_i$.
One possible way to normalize the variables is to require that $f_i(p_{i+1})=0$ (where the set of indices is identified with $\Z_n$), which leads to well defined functions
$\a_{ij}$ on $\wt{\UU}_{0,n}[\psi]$ (such that $\a_{i,i+1}=0$), so that \eqref{fi-fj-eq} becomes a presentation of $\OO(\wt{\CC}_n)$ as an algebra over $\OO(\wt{\UU}_{0,n}[\psi])$.
Furthermore, $1$ and $(f_i^m)_{m\ge 1}$ form a basis for $\OO(\wt{\CC}_n)$ as an $\OO(\wt{\UU}_{0,n}[\psi])$-module, so we have an isomorphism of $\Z^n$-graded 
$R=\OO(\wt{\UU}_{0,n}[\psi])$-modules,
\begin{equation}\label{univ-curve-module-eq}
\OO(\wt{\CC}_n)\simeq R\oplus \bigoplus_{1\le i\le n}\bigoplus_{a>0} R(-ae_i),
\end{equation}
where we use the natural $\Z^n$-grading given by $\deg(\a_{ij})=\deg(f_i)=e_i$.

The defining relations between $\a_{ij}$ are quadratic, obtained by expressing $c_{ij}$ using different $k$'s. More precisely, 
we have the following presentation of $\OO(\wt{\UU}_{0,n})$ as a graded module over the polynomial
algebra $S$:
\begin{equation}\label{bil-rel-res}
\bigoplus_{i<j}S(-e_i-e_j)^{n-3}\to S\to \OO(\wt{\UU}_{0,n}[\psi])\to 0
\end{equation}

On the other hand, the differences $\wt{\a}_{ij}-\wt{\a}_{ik}$ descend to the well defined functions on $\wt{\UU}_{0,n}$, generating the algebra $\OO(\wt{\UU}_{0,n})$,
subject to the defining relations 
\begin{equation}\label{psi-mod-rel}
(\wt{\a}_{ik}-\wt{\a}_{ij})(\wt{\a}_{jk}-\wt{\a}_{ji})=(\wt{\a}_{il}-\wt{\a}_{ij})(\wt{\a}_{jl}-\wt{\a}_{ji}),
\end{equation}
which are equivalent to $(R^{(i,j)})$ (upon identifiying $v^{(i)}_j$ with $\wt{\a}_{ij}$).

There is a natural morphism from $\ov{M}_{0,n}$ to the stack $\UU_{0,n}[\psi]$, associating with the stable curve $(C,p_1,\ldots,p_n)$ the curve
$$\ov{C}=\Proj(\bigoplus_{m\ge 0} H^0(C,\OO(m\sum_i p_i)))$$
with the induced marked points. 
For each $i$ we have a line bundle $\la_i$ on $\UU_{0,n}[\psi]$ which becomes trivial on $\wt{\UU}_{0,n}[\psi]$ and corresponds to the character of $\G_m$
given by the $i$th projection. Its pull-back to $\ov{M}_{0,n}$ is exactly $L_i$. 
This gives rise to a homomorphism of $\Z^n$-graded algebras
$$\OO(\wt{\UU}_{0,n}[\psi])\to A_n,$$
which is an isomorphism in degree $1$. 


Let us set $B_n=\OO(\wt{\UU}_{0,n}[\psi])$, and 
denote by 
$$B_{n,m}\sub B_{n+m}$$
the subalgebra given by the sum of graded components of degrees $\Z e_1+\ldots+\Z e_n\sub \Z^{n+m}$.

\begin{lemma}\label{rel-curve-lem} 
There is a natural isomorphism of $\Z^{n}$-graded algebras
$$B_{n,m}\simeq \OO(\wt{\CC}^m_n),$$
where $\wt{\CC}^m_n$ is the $m$th relative cartesian power of $\wt{\CC}_n$ over $\wt{\UU}_{0,n}[\psi]$.
\end{lemma}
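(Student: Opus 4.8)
The plan is to realize $B_{n,m}$ as the coordinate ring of a geometric quotient of $\wt{\UU}_{0,n+m}[\psi]$, to exhibit a natural forgetful morphism from that quotient to $\wt{\CC}^m_n$, and to check that the induced map of $\Z^n$-graded algebras is an isomorphism by a Hilbert series comparison. First I would observe that the $\Z^{n+m}$-grading of $B_{n+m}=\OO(\wt{\UU}_{0,n+m}[\psi])$ is the one attached to the $\G_m^{n+m}$-action rescaling the $n+m$ tangent vectors, so that $B_{n,m}=B_{n+m}^{\G_m^m}$, where $\G_m^m$ rescales the last $m$ of them. Since this action is free, $Q:=\Spec(B_{n+m})/\!\!/\,\G_m^m=\Spec(B_{n,m})$ is the honest orbit space; modularly it parametrizes $\psi$-prestable $(n+m)$-pointed curves of genus $0$ carrying nonzero tangent vectors at the first $n$ marked points only. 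Then I would define $\rho\colon Q\to\wt{\CC}^m_n$ over $\wt{\UU}_{0,n}[\psi]$ by sending $(C,p_1,\dots,p_{n+m},v_1,\dots,v_n)$ to the $\psi$-stabilization $\ov{C}$ of $(C,p_1,\dots,p_n)$ (contract the components of $C$ on which $\OO_C(p_1+\dots+p_n)$ has degree $0$) together with the images $\ov{p}_{n+1},\dots,\ov{p}_{n+m}$ of the forgotten points; since each $p_{n+k}$ is distinct from $p_1,\dots,p_n$ and the stabilization is an isomorphism near those points, the $\ov{p}_{n+k}$ lie in $\ov{C}\setminus\{p_1,\dots,p_n\}$, so this is a point of $\wt{\CC}^m_n$.

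Next I would identify $\rho^{*}\colon\OO(\wt{\CC}^m_n)\to\OO(Q)=B_{n,m}$ in terms of the presentations of Section~\ref{psi-curves-sec}. Over $R:=\OO(\wt{\UU}_{0,n}[\psi])$, the ring $\OO(\wt{\CC}_n)$ is generated by $f_1,\dots,f_n$ with the relations \eqref{fi-fj-eq}, so $\OO(\wt{\CC}^m_n)=\OO(\wt{\CC}_n)^{\ot_R m}$ is generated over $R$ by the $f_i^{(k)}$ ($1\le i\le n$, $1\le k\le m$), with the relations \eqref{fi-fj-eq} inside each of the $m$ factors and commutativity across factors. The map $\rho$ restricts over $\wt{\UU}_{0,n}[\psi]$ to the forgetful morphism $\wt{\UU}_{0,n+m}[\psi]\to\wt{\UU}_{0,n}[\psi]$, which induces $R\to B_{n,m}$, and $\rho^{*}$ carries $f_i^{(k)}$ to $\wt{\a}_{i,n+k}=f_i(p_{n+k})\in B_{n,m}$ (defined up to an additive element of $R$, matching the freedom in normalizing $f_i$), because $f_i$ with $i\le n$ is regular on the contracted components and hence descends along the stabilization. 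Well-definedness then reduces to the identity that the image of the relation \eqref{fi-fj-eq} in the $k$-th factor is the instance of \eqref{psi-mod-rel} in $B_{n+m}$ with first indices $i,j\le n$ and remaining indices $n+k$ and $l$, for any $l\in[1,n]\setminus\{i,j\}$: expanding $(\wt{\a}_{i,n+k}-\wt{\a}_{ij})(\wt{\a}_{j,n+k}-\wt{\a}_{ji})=(\wt{\a}_{il}-\wt{\a}_{ij})(\wt{\a}_{jl}-\wt{\a}_{ji})$ and using $f_i=\wt{\a}_{i,n+k}$, $f_j=\wt{\a}_{j,n+k}$ recovers $f_if_j=\wt{\a}_{ij}f_j+\wt{\a}_{ji}f_i+c_{ij}$ with constant term exactly as in \eqref{cij-eq} (the index there being $l$); relations between distinct factors hold because $B_{n+m}$ is commutative.

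To conclude I would show $\rho^{*}$ is bijective. It is surjective: $B_{n+m}$ is generated in degrees $e_1,\dots,e_{n+m}$, so every element of $B_{n,m}$ is a polynomial in the degree-$e_i$ generators with $i\le n$, and for each such $i$ the degree-$e_i$ component $R_{e_i}\oplus\bigoplus_{k=1}^m\k f_i^{(k)}$ of $\OO(\wt{\CC}^m_n)$ (read off from \eqref{univ-curve-module-eq} applied to the relative power) surjects onto $(B_{n,m})_{e_i}\cong V_{[1,n+m]\setminus\{i\}}$, the image of $R_{e_i}$ being $V_{[1,n]\setminus\{i\}}$ and the classes $\wt{\a}_{i,n+k}$ supplying the remaining $m$ directions. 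For injectivity it suffices to match Hilbert series. By \eqref{univ-curve-module-eq}, $h_{\OO(\wt{\CC}^m_n)}(q)=h_R(q)\cdot\bigl(1+\sum_{i=1}^n q_i(1-q_i)^{-1}\bigr)^m$, and by Lee's formula \eqref{Lee-formula} together with the identification $\Spec(A_n)\simeq\wt{\UU}_{0,n}[\psi]$ one has $h_R(q)=\bigl(1+\sum_{i=1}^n q_i(1-q_i)^{-1}\bigr)^{n-3}/\prod_{i=1}^n(1-q_i)$, hence $h_{\OO(\wt{\CC}^m_n)}(q)=\bigl(1+\sum_{i=1}^n q_i(1-q_i)^{-1}\bigr)^{n+m-3}/\prod_{i=1}^n(1-q_i)$. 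On the other hand $h_{B_{n,m}}(q_1,\dots,q_n)=h_{B_{n+m}}(q_1,\dots,q_n,0,\dots,0)$, and Lee's formula for $A_{n+m}\simeq B_{n+m}$ gives precisely the same expression upon setting $q_{n+1}=\dots=q_{n+m}=0$. A surjection of $\Z^n$-graded vector spaces with finite-dimensional homogeneous components and equal Hilbert series is an isomorphism, so $\rho^{*}$ is an isomorphism of $\Z^n$-graded algebras.

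The step I expect to be the main obstacle is constructing and analyzing $\rho$: one must show that $\psi$-stabilization with respect to a subset of the marked points yields a flat family of $\psi$-prestable $n$-pointed curves (the analogue of the contraction morphisms on $\ov{M}_{0,n}$, valid since $n\ge 3$), and one must match it precisely with the algebraic description above, including the bookkeeping of the additive normalizations of the $f_i$ and $\wt{\a}_{ij}$. If one prefers to avoid this geometry, the homomorphism $\OO(\wt{\CC}^m_n)\to B_{n,m}$ can instead be written down directly by the formulas on generators of the previous paragraph, verified using \eqref{psi-mod-rel} and \eqref{cij-eq}, and shown to be an isomorphism by the same Hilbert series argument.
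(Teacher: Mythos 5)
There is a genuine gap: your isomorphism check is circular within the logic of the paper. The only argument you give for injectivity is a Hilbert series comparison, and both series you use are obtained from Lee's formula \eqref{Lee-formula} via the identifications $B_{n+m}\simeq A_{n+m}$ and $B_n=\OO(\wt{\UU}_{0,n}[\psi])\simeq A_n$ --- i.e.\ via Theorem A. But Theorem A is proved in the paper by induction using Lemma \ref{B-module-lem}, which is itself a direct consequence of Lemma \ref{rel-curve-lem} (the case $B_{n-1,1}\simeq\OO(\wt{\CC}_{n-1})$). Lee's formula is a statement about the section algebras $A_N$; nothing identifies the Hilbert series of the abstractly presented algebras $B_N$ with it until Theorem A is known, and that is exactly what is at stake. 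So neither your main route nor your ``purely algebraic'' fallback (which ends with the same Hilbert series step) proves the lemma as it is used in the paper. The paper instead argues by matching presentations: taking the relevant graded components of the resolution \eqref{bil-rel-res} shows that $B_{n,m}$ is presented by the generators $\wt{\a}_{ij}$ ($i\le n$) with the relations \eqref{psi-mod-rel} for $i,j\le n$, $k,l\le n+m$; after the normalizations $\a_{ij}=\wt{\a}_{ij}-\wt{\a}_{i,n+m}$ and $\varphi_i^{(r)}=\wt{\a}_{i,n+r}-\wt{\a}_{i,n+m}$ these become \eqref{more-pts-eq}, which by \cite[Lem.~1.1.1]{P-g1} is precisely a presentation of $\OO(\wt{\CC}^m_n)$. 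No Hilbert series input is needed, and injectivity comes for free from the identity of presentations.

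Two secondary points. Your geometric map $\rho$ (contraction plus images of the forgotten points) is essentially the morphism described in the paper's remark following the lemma, but you correctly flag that making it a morphism of schemes (flatness of the contracted family, compatibility of normalizations) requires real work that you do not carry out; the presentation-matching route avoids this entirely. Also, ``the action is free, hence the affine quotient is the orbit space'' is not automatic (freeness does not give closed orbits for a $\G_m^m$-action on an affine scheme); this is harmless for the algebra $B_{n,m}=B_{n+m}^{\G_m^m}$ itself, but the modular description of $\Spec(B_{n,m})$ should be phrased via the quotient stack $[\wt{\UU}_{0,n+m}[\psi]/\G_m^m]$ as in the paper.
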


\begin{proof} 
Taking the appropriate graded components of the exact sequence \eqref{bil-rel-res} we see that the algebra  $B_{n,m}$
has as defining relations the relations \eqref{psi-mod-rel} with 
$i,j\le n$, $k,l\le n+m$.
For $i,j\le n$,
let us set
$$\a_{ij}=\wt{\a}_{ij}-\wt{\a}_{i,n+m},$$
while for $i\le n$, $r=1,\ldots,m-1$, we set
$$\varphi_i^{(r)}=\wt{\a}_{i,n+r}-\wt{\a}_{i,n+m}.$$
In terms of these generators, the defining relations of $B_{n,m}$ can be written as
\begin{equation}\label{more-pts-eq}
\begin{array}{l}
\a_{ik}\a_{jk}=\a_{ij}\a_{jk}+\a_{ji}\a_{ik},\\
\varphi_i^{(r)}\varphi_j^{(r)}=\a_{ij}\varphi_j^{(r)}+\a_{ji}\varphi_i^{(r)}.
\end{array}
\end{equation}
On the other hand, we can think of $\wt{\CC}_{0,n}^m$ as moduli spaces of $(C,p_1,\ldots,p_n,v_1,\ldots,v_n;q_1,\ldots,q_m)$,
where $(C,p_1,\ldots,p_n,v_1,\ldots,v_n)$ is in $\wt{\UU}_{0,n}[\psi]$, and $q_1,\ldots,q_m\in C$ are additional points, different from $p_1,\ldots,p_n$.
We can normalize $(f_i)_{i=1,\ldots,n}$ in equations \eqref{fi-fj-eq} by requiring that $f_i(q_m)=0$, and set
$\varphi_i^{(r)}=f_i(q_r)$, for $r=1,\ldots,m-1$. Then the moduli space $\wt{\CC}_{0,n}^m$ is described precisely by equations \eqref{more-pts-eq}
 (see \cite[Lem.\ 1.1.1]{P-g1}).
\end{proof}

\begin{remark} The embedding of algebras $B_{n,n'}\hra B_{n+n'}$
corresponds via Lemma \ref{rel-curve-lem} to the natural morphism $[\wt{\UU}_{0,n+n'}[\psi]/\G_m^{n'}]\to \wt{\CC}_{n}^m.$
sending $(C,p_1,\ldots,p_{n+n'},v_1,\ldots,v_n)$ to the curve $(\ov{C},p_1,\ldots,p_n,v_1,\ldots,v_n)$ (where $\ov{C}$ is obtained by contracting the components not containing
$p_1,\ldots,p_n$) with the extra points being the images of $p_{n+1},\ldots,p_{n+n'}$.
\end{remark}

\section{Cohomology vanishing}


The following result is due to Lee-Qu \cite[Remark A.1]{Lee-Qu}.

\begin{lemma}\label{Lee-Qu-trick-lem}
Generic global sections $s_1\in H^0(\ov{M}_{0,n},L_1),\ldots,s_{n-2}\in H^0(\ov{M}_{0,n},L_{n-2})$
have no common zeros. 
\end{lemma}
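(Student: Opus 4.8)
The statement asserts that the base locus of the linear system coming from the first $n-2$ of the $\psi$-bundles is empty; since each $|L_i|$ is base-point-free (the $L_i$ are globally generated, as $\psi_i$ is the pullback of $\mathcal{O}(1)$ under a morphism to $\mathbb{P}^{n-3}$), this is a statement about a generic $(n-2)$-tuple of sections. The plan is to reduce the question to one about dimensions via the geometry of the evaluation map. Concretely, for each $i$ let $\phi_i\colon \ov{M}_{0,n}\to \mathbb{P}(H^0(L_i)^\vee)=\mathbb{P}(V^{(i)\,\vee})$ be the morphism defined by $|L_i|$, and consider the product morphism $\Phi=\phi_1\times\cdots\times\phi_{n-2}$. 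A generic choice of $s_i$ corresponds to a generic hyperplane $H_i$ in the $i$th factor, and the common zero locus of $s_1,\ldots,s_{n-2}$ is $\Phi^{-1}(H_1\times\cdots\times H_{n-2})$. By a Bertini-type/incidence-variety argument, this intersection is empty for generic $H_i$ precisely when $\dim\overline{\im(\Phi)} \le (n-2)(n-4)+ \text{(something)}$... more cleanly: the set of $(n-2)$-tuples of hyperplanes that \emph{do} meet $\overline{\im \Phi}$ is a proper subvariety of the product of dual projective spaces if and only if a generic such tuple misses it, which happens exactly when the codimension of $\overline{\im\Phi}$ in $\prod(\mathbb{P}^{n-3})$ is at least... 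Let me instead take the direct route.

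First I would observe that it suffices to show the following dimension bound: for every point $x\in\ov{M}_{0,n}$, among the morphisms $\phi_1,\ldots,\phi_{n-2}$ there are "enough" that move $x$, in the sense that the fibers can be cut down. The key input is a description, for a given boundary stratum point $x=(C,p_1,\ldots,p_n)$ with $C$ a chain or tree of $\mathbb{P}^1$'s, of which $\phi_i$ contract $x$'s neighborhood and which do not: $\phi_i$ is constant along a curve class $\beta$ exactly when $\psi_i\cdot\beta=0$, i.e.\ roughly when the $i$th marked point lies on a component not affected by the degeneration. The plan is to use the combinatorics of genus-zero dual trees: for any proper degeneration direction, $\psi_i$ is nontrivial for all but at most $2$ values of $i$ (the markings on the two "ends"), so among $n-2$ indices at least $n-4$ of the $\phi_i$ are nonconstant near $x$; iterating/stratifying, the generic common zero locus has dimension $\le (n-3)-(n-2) < 0$. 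I would set this up as an induction on $n$, using Lemma~\ref{rel-curve-lem} and the forgetful/gluing maps to peel off one marked point at a time, reducing the base-point statement for $\ov{M}_{0,n}$ to that for $\ov{M}_{0,n-1}$ together with a fiberwise statement about the universal curve, where $\OO(\wt\CC_n)$ is free over $\OO(\wt\UU_{0,n}[\psi])$ by \eqref{univ-curve-module-eq}.

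The cleanest execution, and the one I would actually write, is probably this. Work with the affine cone: by Theorem~A (or just by the identification $H^0(L_i)\simeq V^{(i)}$ together with \eqref{bil-rel-res}), $\Spec A_n=\wt\UU_{0,n}[\psi]$, and the sections $s_i$ are linear functions $\ell_i$ in the coordinates $\a_{i,\bullet}$ dual to $V^{(i)}$. A common zero of generic $s_1,\ldots,s_{n-2}$ on $\ov M_{0,n}$ lifts to a point of $\wt\UU_{0,n}[\psi]$ where $\ell_1=\cdots=\ell_{n-2}=0$ but not all coordinates vanish, i.e.\ a nonzero point in the intersection of $\wt\UU_{0,n}[\psi]$ with $n-2$ generic coordinate hyperplanes of the ambient affine space $\prod_i V^{(i)\,\vee}$. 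So it suffices to show $\dim\wt\UU_{0,n}[\psi]\le (n-2) + 0$... no: $\dim\wt\UU_{0,n}[\psi]=(n-3)+n = 2n-3$ (the $\mathbb{G}_m^n$-torsor over an $(n-3)$-fold), and cutting by $n-2$ generic hyperplanes leaves dimension $\ge 2n-3-(n-2)=n-1>0$, so naive dimension counting is \emph{not} enough — one genuinely needs the geometry. Thus the real content, and the main obstacle, is exactly the combinatorial/geometric claim that for generic $s_i$ the variety $\{s_1=\cdots=s_{n-2}=0\}$ is \emph{empty} rather than merely low-dimensional; equivalently that the image of $\ov M_{0,n}$ under $\Phi=(\phi_1,\ldots,\phi_{n-2})$ in $(\mathbb{P}^{n-3})^{n-2}$ is not met by a generic product of hyperplanes, which forces $\im\Phi$ to lie in a product of smaller-dimensional linear subspaces — one must rule this out. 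I would settle this by exhibiting, for each $x\in\ov M_{0,n}$, explicit sections $s_i$ (depending on $x$) with $s_i(x)\neq 0$ arranged so that the evaluation map $\ov M_{0,n}\to \prod_{i=1}^{n-2}\mathbb{A}^1$, $y\mapsto (s_1(y),\ldots,s_{n-2}(y))$, is dominant onto a product of affine lines in a neighborhood of $x$ in a way that is "generic enough"; more honestly, I expect the argument of Lee–Qu proceeds by a direct induction realizing $\ov M_{0,n}$ fibered over $\ov M_{0,n-1}$ via the universal curve $\wt\CC_{n-1}$ (Lemma~\ref{rel-curve-lem}), choosing $s_n$ to separate the two sections of a $\mathbb{P}^1$-bundle chart and invoking base-point-freeness on fibers, and that verifying the inductive step — that a generic $s_{n-1}$ pulled back from the curve has no zero on the already-cut-down locus in $\ov M_{0,n-1}$ — is where the work lies.
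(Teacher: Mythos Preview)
The paper does not prove this lemma; it merely cites \cite[Remark A.1]{Lee-Qu}. Your proposal does not prove it either: you sketch several approaches and complete none of them. The frustrating part is that you had the correct argument in your first paragraph and abandoned it mid-sentence. The incidence variety
\[
I=\bigl\{(x,s_1,\ldots,s_{n-2}) : s_i(x)=0 \text{ for all }i\bigr\}\subset \ov{M}_{0,n}\times\prod_{i=1}^{n-2}H^0(L_i)
\]
has, over each $x\in\ov{M}_{0,n}$, fiber of codimension exactly $n-2$ in $\prod_i H^0(L_i)$ (because each $L_i$ is globally generated, so evaluation at $x$ is surjective). Hence $\dim I = (n-3) + \sum_i h^0(L_i) - (n-2) < \sum_i h^0(L_i)$, and the projection of $I$ to $\prod_i H^0(L_i)$ cannot be dominant. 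Equivalently, in your $\Phi$-language: $\dim\im(\Phi)\le\dim\ov{M}_{0,n}=n-3$, and a generic product of $n-2$ hyperplanes, having codimension $n-2>n-3$ in $(\P^{n-3})^{n-2}$, avoids any fixed $(n-3)$-dimensional subvariety. That is the whole proof; no boundary stratification, dual-graph combinatorics, or induction on $n$ is needed.

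Your subsequent detours all lead away from this. The affine-cone count fails precisely because passing to the $\G_m^n$-torsor adds $n$ to the dimension; the projective count you discarded does \emph{not} fail. Your invocation of Theorem~A to identify the cone with $\wt{\UU}_{0,n}[\psi]$ would in any case be circular, since Theorem~A is proved later using the Koszul complex \eqref{Koszul-complex-eq}, which in turn rests on this very lemma. And the final paragraph (``I expect the argument of Lee--Qu proceeds by\ldots'') is speculation about someone else's proof, not a proof.
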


It follows from the above lemma 
that for each $a_1>0,\ldots,a_{n-2}>0$, there exist exact Koszul complexes of the form
\begin{equation}\label{Koszul-complex-eq}
0\to \OO\to \bigoplus_{i=1}^{n-2} L_i^{a_i}\to\ldots\to {\bigwedge}^{n-3}(\bigoplus_{i=1}^{n-2} L_i^{a_i})\to L_1^{a_1}\ot\ldots\ot L_{n-2}^{a_{n-2}}\to 0.
\end{equation}

\begin{definition}
(i) Let us call a line bundle on $\ov{M}_{0,n}$ of the form $L=L_1^{a_1}\ot\ldots \ot L_n^{a_n}$, with $a_i\ge 0$, a {\it positive $\psi$-bundle}.
We refer to $a(L):=\sum a_ie_i\in \Z^n$ as the weight of $L$, and to the set $\{i \ |\ a_i>0\}$ as support of $a(L)$. 

\noindent
(ii) We say that a line bundle on $\ov{M}_{0,n}$ is a {\it weakly positive $\psi$-bundle} if it is the tensor product of pull-backs of $L_i$ under the morphisms $\ov{M}_{0,n}\to \ov{M}_{0,m}$
forgetting some of the punctures.
\end{definition}

We refer to the following Lemma (and Corollary \ref{hypercoh-cor} below) as Lee-Qu's trick.

\begin{lemma}\label{Lee-Qu-trick}
Let $F^0\to F^1\to\ldots$ be a complex of coherent sheaves on $\ov{M}_{0,n}$, and let $S\sub [1,n]$ be a subset with $|S|=n-2$.
Assume that 
$$\H^{>0}(\ov{M}_{0,n}, F^\bullet\otimes L)=0$$
for every positive $\psi$-bundle $L$ such that the support of $a(L)$ is a proper subset of $S$.
Then
$$\H^{>0}(\ov{M}_{0,n}, F^\bullet\otimes L)=0$$
for every positive $\psi$-bundle $L$ such that the support of $a(L)$ is contained in $S$.
\end{lemma}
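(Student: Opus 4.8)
The plan is to induct on the size of the support of $a(L)$. If the support is already a proper subset of $S$, we are done by hypothesis, so assume the support is all of $S$, say $S=\{i_1,\ldots,i_{n-2}\}$, and write $L=L_{i_1}^{a_{i_1}}\otimes\cdots\otimes L_{i_{n-2}}^{a_{i_{n-2}}}$ with all $a_{i_r}>0$. The idea is to tensor the exact Koszul complex \eqref{Koszul-complex-eq} (built from generic sections $s_{i_1},\ldots,s_{i_{n-2}}$ of $L_{i_1}^{a_{i_1}},\ldots,L_{i_{n-2}}^{a_{i_{n-2}}}$, which have no common zeros by Lemma \ref{Lee-Qu-trick-lem}) with the complex $F^\bullet$, and to run a hypercohomology spectral sequence argument. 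Concretely, the complex
$$
0\to \OO\to \bigoplus_{r} L_{i_r}^{a_{i_r}}\to\cdots\to {\bigwedge}^{n-3}\Big(\bigoplus_r L_{i_r}^{a_{i_r}}\Big)\to L\to 0
$$
is exact, so after tensoring with $F^\bullet$ and breaking it into short exact sequences of complexes (equivalently, viewing the double complex and taking the spectral sequence whose first page involves $\H^\bullet(\ov{M}_{0,n},F^\bullet\otimes {\bigwedge}^p(\oplus_r L_{i_r}^{a_{i_r}}))$), the term $F^\bullet\otimes L$ sits at one end and is quasi-isomorphic to the totalization of $F^\bullet$ tensored with the truncated complex $\OO\to\bigoplus_r L_{i_r}^{a_{i_r}}\to\cdots\to{\bigwedge}^{n-3}(\oplus_r L_{i_r}^{a_{i_r}})$.

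The key point is that every wedge power ${\bigwedge}^p(\bigoplus_{r=1}^{n-2} L_{i_r}^{a_{i_r}})$ for $0\le p\le n-3$ is a direct sum of line bundles of the form $L_{i_{r_1}}^{a_{i_{r_1}}}\otimes\cdots\otimes L_{i_{r_p}}^{a_{i_{r_p}}}$ with $\{r_1,\ldots,r_p\}$ a $p$-element subset of $\{1,\ldots,n-2\}$; since $p\le n-3<n-2=|S|$, each such summand is a positive $\psi$-bundle whose support is a \emph{proper} subset of $S$. Hence by the standing hypothesis, $\H^{>0}(\ov{M}_{0,n},F^\bullet\otimes {\bigwedge}^p(\oplus_r L_{i_r}^{a_{i_r}}))=0$ for all $p$ in the range $0\le p\le n-3$. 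Feeding this vanishing into the spectral sequence of the double complex $F^\bullet\otimes(\text{Koszul complex})$ shows that the hypercohomology of $F^\bullet\otimes L$ agrees, in positive degrees, with that of the totalization of the already-vanishing columns, forcing $\H^{>0}(\ov{M}_{0,n},F^\bullet\otimes L)=0$.

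The main obstacle is purely bookkeeping: one must be careful about the indexing and the direction of the differentials in the Koszul complex \eqref{Koszul-complex-eq}, so that $L$ appears at the correct (terminal) spot and the remaining terms are exactly the wedge powers of degree $\le n-3$; and one must check that the spectral sequence degenerates enough — i.e., that the only potentially nonzero contributions to $\H^{>0}(F^\bullet\otimes L)$ come from the $\H^{>0}$ of the other terms, which we have killed. There is no delicate positivity input beyond Lemma \ref{Lee-Qu-trick-lem}; the whole content is the homological-algebra manipulation of the Koszul resolution tensored with $F^\bullet$. One should also note the trivial base case: when the support of $a(L)$ is empty, $L=\OO$ and there is nothing to prove beyond the hypothesis itself.
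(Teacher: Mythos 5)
Your proposal is correct and is essentially the paper's own argument: the paper likewise reduces to the case where the support of $a(L)$ is all of $S$ and then replaces $L$ by the exact Koszul complex \eqref{Koszul-complex-eq}, whose remaining terms are sums of positive $\psi$-bundles supported on proper subsets of $S$, so the hypothesis and the hypercohomology spectral sequence give the vanishing. The only difference is that you spell out the spectral-sequence bookkeeping that the paper leaves as "follows immediately."
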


\begin{proof}
It is enough to consider the case when $S=\{1,\ldots,n-2\}$ and $L=L_1^{a_1}\ldots L_{n-2}^{a_{n-2}}$, where $a_i>0$. But then the assertion
follows immediately by replacing $L$ with its resolution given by the Koszul complex \eqref{Koszul-complex-eq}.
%
%
%
%
\end{proof}

Applying the above lemma to $F^\bullet\otimes L_1^{a_1}L_2^{a_2}$ then gives the following.

\begin{cor}\label{hypercoh-cor}
Let $F^0\to F^1\to\ldots$ be a complex of coherent sheaves on $\ov{M}_{0,n}$.
Assume that 
$$\H^{>0}(\ov{M}_{0,n}, F^\bullet\otimes L_1^{a_1}\ldots L_n^{a_n})=0$$
for all $a_1,\ldots,a_n\ge 0$, such that $a_i=0$ for some $i>2$. 
Then
$$\H^{>0}(\ov{M}_{0,n}, F^\bullet\otimes L)=0$$
for every positive $\psi$-bundle $L$.
\end{cor}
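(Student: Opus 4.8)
The plan is to obtain Corollary~\ref{hypercoh-cor} as a purely formal consequence of Lemma~\ref{Lee-Qu-trick}, applied once (to a twist of $F^\bullet$) after a trivial reduction.

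First I would split the positive $\psi$-bundles $L=L_1^{a_1}\cdots L_n^{a_n}$ into two families: those with $a_i=0$ for some index $i>2$, and those with $a_i>0$ for every $i\in\{3,\ldots,n\}$. For the first family the asserted vanishing $\H^{>0}(\ov{M}_{0,n},F^\bullet\otimes L)=0$ is literally the hypothesis of the corollary, so nothing is needed. Hence it suffices to treat an $L$ with $a_3,\ldots,a_n>0$ and $a_1,a_2\ge 0$ arbitrary (including possibly $a_1=a_2=0$).

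For such an $L$, I would write $L=(L_1^{a_1}L_2^{a_2})\otimes M$ with $M:=L_3^{a_3}\cdots L_n^{a_n}$, set $G^\bullet:=F^\bullet\otimes L_1^{a_1}L_2^{a_2}$, and apply Lemma~\ref{Lee-Qu-trick} to the complex $G^\bullet$ with the $(n-2)$-element subset $S:=\{3,\ldots,n\}\subset[1,n]$. To check the hypothesis of that lemma, take any positive $\psi$-bundle $N$ whose support is a proper subset of $S$; then $G^\bullet\otimes N=F^\bullet\otimes(L_1^{a_1}L_2^{a_2}\otimes N)$, where $L_1^{a_1}L_2^{a_2}\otimes N$ is again a positive $\psi$-bundle, and its $i$-th exponent vanishes for any $i\in S$ that is not in the support of $N$; such an $i>2$ exists since $\operatorname{supp}(N)\subsetneq S$. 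Therefore $\H^{>0}(\ov{M}_{0,n},G^\bullet\otimes N)=0$ by the hypothesis of the corollary. Lemma~\ref{Lee-Qu-trick} then yields $\H^{>0}(\ov{M}_{0,n},G^\bullet\otimes N)=0$ for every positive $\psi$-bundle $N$ with support contained in $S$; specializing to $N=M$ gives $\H^{>0}(\ov{M}_{0,n},F^\bullet\otimes L)=0$, as claimed.

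I do not expect a serious obstacle here: the argument is bookkeeping with supports, and the genuine input (the Koszul resolution~\eqref{Koszul-complex-eq} underlying Lemma~\ref{Lee-Qu-trick}) is already available. The only points to keep straight are that the reduction must allow $a_1=a_2=0$, so that the bundle $M$ itself is handled uniformly; that $S$ must be chosen to omit exactly the two privileged indices $1,2$, so that peeling off the factor $L_1^{a_1}L_2^{a_2}$ always lands inside the hypothesis of the corollary; and that tensoring a complex of coherent sheaves by a line bundle is exact, so the hypercohomology vanishing statements compose as expected.
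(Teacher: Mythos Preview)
Your proposal is correct and is exactly the argument the paper indicates in the one line preceding the corollary: apply Lemma~\ref{Lee-Qu-trick} to the complex $G^\bullet=F^\bullet\otimes L_1^{a_1}L_2^{a_2}$ with $S=\{3,\ldots,n\}$. You have simply spelled out the bookkeeping that the paper leaves implicit.
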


For each $i<j$ (and $n\ge 4$), we denote by $\de_{ij}\sub \ov{M}_{0,n}$ the boundary divisor corresponding to stable curves where the marked points $p_i$ and $p_j$ are
separated from the remaining marked points.

Let $\pi:\ov{M}_{0,n}\to \ov{M}_{0,n-1}$ be the map forgeting the $n$th puncture. Then it is well known that
$L_i\simeq \pi_i^*L_i(\de_{in})$ and $L_i|_{\de_{in}}\simeq \OO$, which leads to an exact sequence
\begin{equation}\label{pi-Li-seq}
0\to \pi^*L_i\to L_i\to \OO_{\de_{in}}\to 0
\end{equation}
(see \cite[Sec.\ 1.2]{Pand}, \cite[Sec.\ 2]{KT}).

\begin{lemma}\label{positive-push-f-lem} 
For any $a_1\ge 0,\ldots,a_{n-1}\ge 0$,
the push-forwards $R\pi_*(L_1^{a_1}\ot \ldots\ot L_{n-1}^{a_{n-1}})$ and $R\pi_*(L_1^{a_1}\ot \ldots\ot L_{n-1}^{a_{n-1}}(-\de_{1n}))$ 
are vector bundles, and each of them admits a filtration whose associated quotients are positive $\psi$-bundles
on $\ov{M}_{0,n-1}$.
\end{lemma}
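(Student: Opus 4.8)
The plan is to induct on the number of forgotten points, but actually a cleaner approach is to use the projection formula together with the sequence \eqref{pi-Li-seq} to reduce the multi-weight case to understanding a single $R\pi_*L_n^{a}$. First I would dispose of the case $a_n=0$: then $L_1^{a_1}\ot\ldots\ot L_{n-1}^{a_{n-1}}$ is pulled back from $\ov{M}_{0,n-1}$ \emph{up to the boundary twist}. More precisely, since $L_i\simeq \pi^*L_i(\de_{in})$ only for $i\le n-1$ does not immediately give a pull-back, one must be careful; instead I would argue by induction on $\sum_{i\le n-1}a_i$, peeling off one factor $L_i$ at a time using \eqref{pi-Li-seq} tensored with the remaining bundle. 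The base case $a_1=\ldots=a_{n-1}=0$ gives $R\pi_*\OO = \OO$ (the fiber is $\P^1$), which is a positive $\psi$-bundle (with zero weight), and $R\pi_*\OO(-\de_{1n})$: here $\de_{1n}$ restricts to a point section on each fiber $\cong \P^1$, so $R\pi_*\OO(-\de_{1n})=0$, which is trivially filtered.

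Next, for the inductive step on $\sum a_i$ (for the bundle without the $(-\de_{1n})$ twist), tensor \eqref{pi-Li-seq} for a chosen index $i$ (say the largest $i$ with $a_i>0$) with $M:=L_1^{a_1}\ot\ldots\ot L_i^{a_i-1}\ot\ldots\ot L_{n-1}^{a_{n-1}}$, obtaining
\begin{equation}
0\to \pi^*L_i\ot M\to L_i\ot M\to M|_{\de_{in}}\to 0.
\end{equation}
By the projection formula $R\pi_*(\pi^*L_i\ot M)\simeq L_i\ot R\pi_*M$, and by induction $R\pi_*M$ has a filtration by positive $\psi$-bundles, hence so does $L_i\ot R\pi_*M$ (tensoring by $L_i$ preserves the class of positive $\psi$-bundles, and preserves exactness). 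For the third term, $\de_{in}\to \ov{M}_{0,n-1}$ is an isomorphism, and under it $M|_{\de_{in}}$ corresponds to an explicit positive $\psi$-bundle on $\ov{M}_{0,n-1}$ (using $L_j|_{\de_{in}}\simeq L_j$ for $j\neq i$ in the standard identifications; one must double-check the comparison of $\psi$-classes under the clutching/section $\ov{M}_{0,n-1}\cong\de_{in}$ — this is where the genuine bookkeeping lies). Since $\pi$ restricted to $\de_{in}$ is an isomorphism, $R\pi_*$ of the third term is that same positive $\psi$-bundle, concentrated in degree $0$. So the long exact sequence of $R\pi_*$ reads $0\to L_i\ot R^0\pi_*M\to R^0\pi_*(L_i\ot M)\to M|_{\de_{in}}\to R^1\pi_*M\to R^1\pi_*(L_i\ot M)\to 0$; by induction $R^1\pi_*M=0$ (vector bundle hypothesis, part of the inductive claim is that only $R^0$ survives), so $R^1\pi_*(L_i\ot M)=0$ and we get a two-step filtration of the vector bundle $R^0\pi_*(L_i\ot M)$ with quotients $L_i\ot R^0\pi_*M$ and $M|_{\de_{in}}$, each of which is filtered by positive $\psi$-bundles by induction. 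This proves the claim for $L_1^{a_1}\ot\ldots\ot L_{n-1}^{a_{n-1}}$.

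For the twisted version, $R\pi_*(L_1^{a_1}\ot\ldots\ot L_{n-1}^{a_{n-1}}(-\de_{1n}))$, I would run the same induction, but with one modification at the very start: I would first tensor \eqref{pi-Li-seq} (for $i=1$, if $a_1>0$) or handle the base case $a_i\equiv 0$ directly. When $a_1>0$ there is a slicker route: $L_1(-\de_{1n})\simeq \pi^*L_1$ by the quoted identity $L_1\simeq\pi^*L_1(\de_{1n})$, so the twisted bundle equals $\pi^*L_1\ot L_2^{a_2}\ot\ldots\ot L_{n-1}^{a_{n-1}}\ot L_1^{a_1-1}$, and the projection formula reduces it to $L_1\ot R\pi_*(L_1^{a_1-1}L_2^{a_2}\cdots L_{n-1}^{a_{n-1}})$, i.e.\ to the untwisted case already done. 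When $a_1=0$, the twisted bundle is $L_2^{a_2}\cdots L_{n-1}^{a_{n-1}}(-\de_{1n})$; restricting \eqref{pi-Li-seq} for some $i\ge 2$ and using that $\OO_{\de_{in}}(-\de_{1n})$ is a line bundle on $\de_{in}\cong\ov{M}_{0,n-1}$ that is itself (the pullback of) a positive $\psi$-bundle — indeed $\de_{1n}\cap\de_{in}$ corresponds to a boundary divisor on $\ov{M}_{0,n-1}$ and $\OO(-\de_{1n})|_{\de_{in}}\cong L_?$ for an appropriate marking — we again descend to an induction with base case $R\pi_*\OO(-\de_{1n})=0$ noted above.

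The main obstacle I anticipate is not the homological algebra (the long exact sequence argument is routine once $R^1\pi_*$ vanishing is built into the induction) but the precise identification of the restricted bundles: computing $(L_j)|_{\de_{in}}$ and $\OO(\pm\de_{1n})|_{\de_{in}}$ in terms of the standard line bundles on $\de_{in}\cong\ov{M}_{0,n-1}$, keeping track of which marking each $\psi$-class belongs to and the effect of the gluing/node. This requires the standard comparison formulas for $\psi$-classes under the clutching maps $\ov{M}_{0,A\cup\{\star\}}\times\ov{M}_{0,B\cup\{\star\}}\to\ov{M}_{0,n}$, and for a genus-$0$ two-pointed component $\ov{M}_{0,2\cup\{\star\}}=\mathrm{pt}$ the $\psi$-class at $\star$ is trivial, which is exactly what makes all the restrictions land among positive (rather than merely weakly positive) $\psi$-bundles. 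Verifying $R^1\pi_*$ vanishes throughout — equivalently that each intermediate sheaf has no higher cohomology along the $\P^1$-fibers — follows because positive $\psi$-bundles restrict to $\OO$ or $\OO(\mathrm{nonneg})$ on fibers of $\pi$ (again from $L_i|_{\text{fiber}}\cong\OO$ or $\OO(1)$), so this is a sanity check rather than a difficulty.
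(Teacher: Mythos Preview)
Your approach is essentially the paper's: induction on $\sum a_i$ via the sequence \eqref{pi-Li-seq}, with base cases $R\pi_*\OO=\OO$ and $R\pi_*\OO(-\de_{1n})=0$, and for the twisted bundle the same case split (if $a_1>0$ use $L_1(-\de_{1n})\simeq\pi^*L_1$ and the projection formula; otherwise peel off $L_i$ for some $i\ge 2$).

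The one concrete error is in your handling of the case $a_1=0$, $a_i>0$ with $i\ge 2$. You write that $\de_{1n}\cap\de_{in}$ ``corresponds to a boundary divisor on $\ov{M}_{0,n-1}$'' and that $\OO(-\de_{1n})|_{\de_{in}}\cong L_?$. In fact $\de_{1n}\cap\de_{in}=\emptyset$ for $i\neq 1$: the point $p_n$ cannot sit on two distinct two-pointed tails of the same stable curve. Hence $\OO(-\de_{1n})|_{\de_{in}}\simeq\OO$, and tensoring \eqref{pi-Li-seq} by $M(-\de_{1n})$ gives the triangle
\[
R\pi_*\bigl(M(-\de_{1n})\bigr)\otimes L_i \to R\pi_*\bigl(L_iM(-\de_{1n})\bigr)\to M|_{\de_{in}}\to\ldots
\]
with no residual twist on the third term. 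This disjointness is exactly the fact the paper invokes, and it dissolves the ``main obstacle'' you anticipated: the only restriction formulas needed are $L_i|_{\de_{in}}\simeq\OO$ and $L_j|_{\de_{in}}\simeq L_j$ for $j\neq i,n$, which are the standard companions to \eqref{pi-Li-seq}. No further clutching-map bookkeeping is required.
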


\begin{proof}
We can use induction in $\sum a_i$, where $L=L_1^{a_1}\ot \ldots\ot L_{n-1}^{a_{n-1}}$. When $\sum a_i=0$ we just use standard facts
$$R\pi_*\OO_{\ov{M}_{0,n}}=\OO_{\ov{M}_{0,n-1}}, \ \ R\pi_*\OO_{\ov{M}_{0,n}}(-\de_{1n})=0.$$
For the induction step, assume $a_i>0$ for some $i$, so that $L\ot L_i^{-1}$ is still a positive $\psi$-bundle. Then the exact sequence
\eqref{pi-Li-seq}
leads to an exact triangle
$$R\pi_*(LL_i^{-1})\ot L_i\to R\pi_*(L)\to LL_i^{-1}|_{\de_{in}}\to\ldots,$$
where we use the identification of $\de_{in}$ with $\ov{M}_{0,n-1}$.
We have $L_i|_{\de_{in}}\simeq \OO$, and $L_j|_{\de_{in}}\simeq L_j$ for $j\neq i,n$. Hence, $L|_{\de_{in}}$ is a positive $\psi$-bundle on $\ov{M}_{0,n-1}$.
Now the assertion for $R\pi_*(L)$ follows from the induction assumption applied to $R\pi_*(LL_i^{-1})$.

Similarly, if $a_i>0$ for some $i>1$ then using the fact that $\de_{1n}\cap \de_{in}=\emptyset$, we get an exact triangle
$$R\pi_*(LL_i^{-1}(-\de_{1n}))\ot L_i\to R\pi_*(L(-\de_{1n}))\to LL_i^{-1}|_{\de_{in}}\to\ldots,$$
which gives an assertion for $R\pi_*(L(-\de_{1n}))$ once we know it for $R\pi_*(LL_i^{-1}(-\de_{1n}))$.
On the other hand, if $a_1>0$ then 
$$R\pi_*(L(-\de_{1n}))\simeq R\pi_*((LL_1^{-1})\ot \pi^*L_1)\simeq R\pi_*(LL_1^{-1})\ot L_1,$$
so the assertion for $R\pi_*(L(-\de_{1n}))$ follows from that for $R\pi_*(LL_1^{-1})$, where $LL_1^{-1}$ is a positive $\psi$-bundle.
\end{proof}

To illustrate Lee-Qu's trick (Lemma \ref{Lee-Qu-trick}), let us show how combining it with Lemma \ref{positive-push-f-lem}
one can reprove the following cohomology vanishing due to Pandharipande \cite{Pand}.\footnote{Lee and Qu claim in \cite[App.\ A]{Lee-Qu} to give another such proof, however they
rely on $K$-theoretic description of the push-forward to $\ov{M}_{0,n-1}$, which should be replaced by a filtration as in Lemma \ref{positive-push-f-lem}.}

\begin{prop}\label{L-positive-vanishing-prop} 
If $L$ is a positive $\psi$-bundle then $H^{>0}(\ov{M}_{0,n},L)=0$.
\end{prop}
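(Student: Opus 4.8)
The plan is to prove the vanishing $H^{>0}(\ov{M}_{0,n},L)=0$ for a positive $\psi$-bundle $L=L_1^{a_1}\ot\ldots\ot L_n^{a_n}$ by induction on $n$, with the base case $n=3$ being trivial since $\ov{M}_{0,3}$ is a point. For the inductive step, by Corollary \ref{hypercoh-cor} (applied to the trivial one-term complex $F^\bullet=\OO_{\ov{M}_{0,n}}$) it suffices to treat the case where $a_i=0$ for some index $i>2$; by symmetry we may assume $a_n=0$, so that $L$ is pulled back from the forgetful map $\pi:\ov{M}_{0,n}\to\ov{M}_{0,n-1}$ only in the sense that its support lies in $[1,n-1]$ — more precisely $L=L_1^{a_1}\ot\ldots\ot L_{n-1}^{a_{n-1}}$ with all the $\psi$-classes being those on $\ov{M}_{0,n}$, not their pullbacks.

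First I would push forward along $\pi$ and use the Leray spectral sequence $H^p(\ov{M}_{0,n-1},R^q\pi_*L)\Rightarrow H^{p+q}(\ov{M}_{0,n},L)$. By Lemma \ref{positive-push-f-lem}, $R\pi_*(L_1^{a_1}\ot\ldots\ot L_{n-1}^{a_{n-1}})$ is a vector bundle concentrated in degree $0$ (so the higher $R^q\pi_*$ vanish, collapsing the spectral sequence) which admits a filtration whose associated graded pieces are positive $\psi$-bundles on $\ov{M}_{0,n-1}$. Then $H^{>0}(\ov{M}_{0,n},L)=H^{>0}(\ov{M}_{0,n-1},\pi_*L)$, and since cohomology vanishing in positive degrees is preserved under extensions, the vanishing $H^{>0}(\ov{M}_{0,n-1},\pi_*L)=0$ follows from the inductive hypothesis applied to each positive $\psi$-bundle appearing in the filtration of $\pi_*L$.

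The main subtlety to get right is the bookkeeping in the induction: one must check that the reduction via Corollary \ref{hypercoh-cor} genuinely lands us in a situation where Lemma \ref{positive-push-f-lem} applies, i.e.\ that after forgetting a puncture in the support-complement we have a genuine positive $\psi$-bundle with support in $[1,n-1]$ on $\ov{M}_{0,n}$, so that its pushforward is governed by the filtration lemma. A second point worth stating explicitly is why the filtration argument works: if $0\to F'\to F\to F''\to 0$ is exact with $H^{>0}(F')=H^{>0}(F'')=0$, then the long exact sequence in cohomology forces $H^{>0}(F)=0$, and one iterates over the (finite) filtration. Neither of these is hard, but the first is where an inattentive write-up would stumble.

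I expect the only real obstacle to be ensuring that Corollary \ref{hypercoh-cor} does reduce the general positive $\psi$-bundle to one whose support omits some index $>2$ — this is exactly the content of that corollary, so in effect the whole proof is: invoke Corollary \ref{hypercoh-cor}, then induct using Lemma \ref{positive-push-f-lem} and the Leray spectral sequence. The proof is therefore short; almost all the work has been front-loaded into Lee--Qu's trick and the pushforward filtration lemma.
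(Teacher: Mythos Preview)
Your proposal is correct and follows essentially the same route as the paper: induct on $n$, invoke the Lee--Qu trick (the paper cites Lemma \ref{Lee-Qu-trick} directly, you cite its Corollary \ref{hypercoh-cor}, which amounts to the same thing here) to reduce to the case $a_n=0$, then apply Lemma \ref{positive-push-f-lem} so that $R\pi_*L$ is a vector bundle filtered by positive $\psi$-bundles on $\ov{M}_{0,n-1}$ and conclude by the induction hypothesis. The extra details you spell out (Leray spectral sequence collapse, stability of vanishing under extensions) are the implicit content of the paper's one-line appeal to Lemma \ref{positive-push-f-lem} and the induction assumption.
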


\begin{proof}
We use induction on $n$. By Lemma \ref{Lee-Qu-trick}, we can assume that $a_i=0$ for some $i$.
Without loss of generality, we can assume that $a_n=0$. 
Then the assertion follows from Lemma \ref{positive-push-f-lem} and the induction assumption applied to positive $\psi$-bundles on $\ov{M}_{0,n-1}$.
\end{proof}

\begin{remark}\label{Pand-van-rem} 
In fact, Pandharipande proves in \cite[Prop.\ 1']{Pand} that any {\it weakly} positive $\psi$-bundle on $\ov{M}_{0,n}$ has vanishing higher cohomology.
We will use this fact later in the proof of Theorem B.
\end{remark}

Let us define the vector bundle $\VV_i$ of rank $n-3$ on $\ov{M}_{0,n}$ from the exact sequence
$$0\to \VV_i\to H^0(L_i)\ot \OO\to L_i\to 0.$$
In other words, this is the pull-back of $\Om^1_{\P^{n-3}}$ under the map $\ov{M}_{0,n}\to \P^{n-3}$ given by the linear system $|L_i|$
(the latter map was first studied by Kapranov \cite{Kapranov}).

Let $\pi:\ov{M}_{0,n}\to \ov{M}_{0,n-1}$ be the map forgeting the $n$th puncture.
We will use the following exact sequence, due to Keel-Tevelev \cite[Lem.\ 3.1]{KT}:
\begin{equation}\label{KT-seq}
0\to \pi^*\VV_1\to \VV_1\to \OO(-\de_{1,n})\to 0,
\end{equation}
where in the first term we use the bundle $\VV_1$ on $\ov{M}_{0,n-1}$.

\begin{prop}\label{V-positive-vanishing-prop} 
If $L$ is a positive $\psi$-bundle then $H^{>0}(\ov{M}_{0,n},\VV_1\ot L)=0$.
\end{prop}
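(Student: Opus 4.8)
The plan is to prove Proposition~\ref{V-positive-vanishing-prop} by induction on $n$, running the same two-step strategy that was used for Proposition~\ref{L-positive-vanishing-prop}: first reduce to the case where the support of $a(L)$ omits some index using Lee-Qu's trick, then push forward along the forgetful map and invoke the inductive hypothesis. The base case $n=3$ (or $n=4$) is trivial since $\VV_1$ has rank $n-3$. For the inductive step, note that the complex $F^\bullet = [\VV_1]$ concentrated in degree $0$ satisfies the hypotheses of Lemma~\ref{Lee-Qu-trick} once we know the vanishing of $H^{>0}(\ov{M}_{0,n},\VV_1\ot L)$ for all positive $\psi$-bundles $L$ whose support is a \emph{proper} subset of some fixed $(n-2)$-element set $S$; applying the Lemma (and Corollary~\ref{hypercoh-cor}) lets us assume $a_i=0$ for some $i$.

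Here is where a small subtlety appears: if $a_i = 0$ for some $i \ge 2$, we want to forget the $i$th puncture, but the Keel-Tevelev sequence \eqref{KT-seq} is written specifically for the bundle $\VV_1$ attached to the first marking and the map forgetting the $n$th puncture. By the $S_n$-symmetry of $\ov{M}_{0,n}$ that permutes the markings (and correspondingly permutes the $\VV_i$ and the forgetful maps), we may relabel so that the vanished index is $i=n$ and the bundle in question is $\VV_1$; then \eqref{KT-seq} is exactly the tool we need. The only case this relabeling cannot handle is when the \emph{unique} vanished index is $i=1$ — but Corollary~\ref{hypercoh-cor} actually gives us the freedom to arrange $a_i=0$ for some $i>2$, so in particular we can always take the vanished index to be $n$ while keeping $\VV_1$ fixed. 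So assume $a_n=0$ and write $L = L_1^{a_1}\cdots L_{n-1}^{a_{n-1}}$ as a positive $\psi$-bundle pulled back... no: $L$ is a positive $\psi$-bundle on $\ov{M}_{0,n}$ with $a_n=0$, which need not be a pullback from $\ov{M}_{0,n-1}$.

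The key computation is then: tensor the exact sequence \eqref{KT-seq} with $L = L_1^{a_1}\cdots L_{n-1}^{a_{n-1}}$ and take $R\pi_*$. This yields an exact triangle
\[
R\pi_*(\pi^*\VV_1 \ot L) \to R\pi_*(\VV_1\ot L) \to R\pi_*(L(-\de_{1,n})) \to \cdots.
\]
By the projection formula $R\pi_*(\pi^*\VV_1\ot L) \simeq \VV_1 \ot R\pi_*(L)$ (here $\VV_1$ on the left is the bundle on $\ov{M}_{0,n-1}$). By Lemma~\ref{positive-push-f-lem}, both $R\pi_*(L)$ and $R\pi_*(L(-\de_{1,n}))$ are vector bundles on $\ov{M}_{0,n-1}$ admitting filtrations whose graded pieces are positive $\psi$-bundles. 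Hence $\VV_1\ot R\pi_*(L)$ has a filtration with graded pieces of the form $\VV_1\ot(\text{positive }\psi\text{-bundle})$, which have vanishing higher cohomology by the inductive hypothesis; and $R\pi_*(L(-\de_{1,n}))$ has a filtration with positive $\psi$-bundle graded pieces, whose higher cohomology vanishes by Proposition~\ref{L-positive-vanishing-prop}. Therefore $R\pi_*(\VV_1\ot L)$ has vanishing higher cohomology on $\ov{M}_{0,n-1}$ (using the long exact sequence in cohomology associated to the triangle, together with the Leray spectral sequence to pass between $\ov{M}_{0,n}$ and $\ov{M}_{0,n-1}$), i.e. $H^{>0}(\ov{M}_{0,n},\VV_1\ot L)=0$, completing the induction.

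The main obstacle I anticipate is bookkeeping around the first step: making sure that Lee-Qu's trick really does reduce us to a situation where \emph{the} forgettable index can be taken to be $n$ while keeping $\VV_1$ as the relevant bundle. Corollary~\ref{hypercoh-cor} is phrased to deliver exactly ``$a_i=0$ for some $i>2$,'' so after relabeling the punctures $3,\dots,n$ among themselves we can assume $a_n=0$ without disturbing $\VV_1$; one should state this carefully. A secondary point requiring care is the interplay between hypercohomology on $\ov{M}_{0,n}$ and ordinary cohomology on $\ov{M}_{0,n-1}$: since $\VV_1\ot L$ is a genuine sheaf (not just a complex), $R\pi_*$ of it is a complex a priori, but the argument above shows its cohomology sheaves have no higher cohomology, and then the Leray spectral sequence gives $H^{>0}(\ov{M}_{0,n},\VV_1\ot L)=0$ directly. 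Everything else is a routine application of the projection formula, Lemma~\ref{positive-push-f-lem}, and the two already-established vanishing results.
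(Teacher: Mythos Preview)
Your proposal is correct and follows essentially the same route as the paper: reduce via Lee-Qu's trick (Corollary~\ref{hypercoh-cor}) to the case $a_n=0$, then tensor the Keel--Tevelev sequence \eqref{KT-seq} by $L$, push forward by $\pi$, and combine Lemma~\ref{positive-push-f-lem}, Proposition~\ref{L-positive-vanishing-prop}, and the inductive hypothesis. Your extra remarks on the symmetry bookkeeping and the Leray spectral sequence are fine elaborations of points the paper leaves implicit.
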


\begin{proof}
Let $L=L_1^{a_1}\ot L_2^{a_2}\ot\ldots\ot L_n^{a_n}$.
Applying Lee-Qu's trick (Lemma \ref{Lee-Qu-trick}) we reduce to the case $a_i=0$ for some $i>2$. 
By symmetry, we can assume $a_n=0$. 

Twisting the sequence \eqref{KT-seq} by $L$ and applying $R\pi_*$, we get an exact triangle
$$\VV_1\ot R\pi_*(L)\to R\pi_*(\VV_1\ot L)\to R\pi_*(L(-\de_{1,n}))\to\ldots$$
By Lemma \ref{positive-push-f-lem}, this implies that $R\pi_*(\VV_1\ot L)$ is a vector bundle that has a filtration whose associated quotients are either positive $\psi$-bundles
or positive $\psi$-bundles tensored with $\VV_1$. Taking into account Proposition \ref{L-positive-vanishing-prop}, the assertion follows by induction on $n$. 
\end{proof}

\begin{cor}\label{surj-cor}
For any positive $\psi$-bundles $L$ and $M$ on $\ov{M}_{0,n}$, the multiplication maps $H^0(L)\ot H^0(M)\to H^0(LM)$ are surjective.
The natural map $S^\bullet(H^0(L_1))\ot\ldots\ot S^\bullet(H^0(L_n))\to A_n$ is surjective.
\end{cor}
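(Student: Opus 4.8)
The plan is to derive Corollary \ref{surj-cor} from Proposition \ref{V-positive-vanishing-prop} by a standard induction on the total weight $\sum a_i$, using the defining exact sequence of $\VV_i$. First I would prove the surjectivity of $H^0(L)\ot H^0(M)\to H^0(LM)$ for positive $\psi$-bundles $L,M$. If $M=\OO$ this is trivial, so write $M=L_i\ot M'$ for some $i$ with $M'$ still a positive $\psi$-bundle; by induction on the weight of $M$ it suffices to show $H^0(L)\ot H^0(L_i)\to H^0(L\ot L_i)$ is surjective, i.e. to treat the case $M=L_i$. Tensor the defining sequence
$$0\to \VV_i\to H^0(L_i)\ot\OO\to L_i\to 0$$
with the positive $\psi$-bundle $L$ and take cohomology: the connecting map lands in $H^1(\VV_i\ot L)$, which vanishes by Proposition \ref{V-positive-vanishing-prop}. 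Hence $H^0(L_i)\ot H^0(L)=H^0(H^0(L_i)\ot\OO\ot L)\to H^0(L_i\ot L)$ is surjective, as desired. (One should note that $H^0$ of a positive $\psi$-bundle is as expected, e.g. via Proposition \ref{L-positive-vanishing-prop}, so that the identification $H^0(H^0(L_i)\ot L)\simeq H^0(L_i)\ot H^0(L)$ is legitimate.)

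For the second assertion, I would argue that the image of $S^\bullet(H^0(L_1))\ot\cdots\ot S^\bullet(H^0(L_n))\to A_n$ contains $H^0(L_1^{a_1}\ot\cdots\ot L_n^{a_n})$ for every $(a_1,\ldots,a_n)$, again by induction on $\sum a_i$. The base case $\sum a_i=0$ is clear. For the inductive step pick $i$ with $a_i>0$ and write $L=L_i\ot L'$ with $L'$ a positive $\psi$-bundle of smaller weight; by the first part the multiplication $H^0(L_i)\ot H^0(L')\to H^0(L)$ is surjective, and $H^0(L')$ lies in the image of the map by the induction hypothesis, while $H^0(L_i)$ is a degree-$e_i$ generator of the source. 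Composing, $H^0(L)$ lies in the image. Since the $\Z^n$-graded pieces of $A_n$ are exactly the $H^0(L_1^{a_1}\ot\cdots\ot L_n^{a_n})$, surjectivity follows.

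There is no serious obstacle here: the content is entirely in Proposition \ref{V-positive-vanishing-prop}, which is already established. The only point requiring a small amount of care is the bookkeeping that lets one reduce the general multiplication statement $H^0(L)\ot H^0(M)\to H^0(LM)$ to the special case $M=L_i$ — one must peel off the factors of $M$ one at a time and keep track that each intermediate tensor product remains a positive $\psi$-bundle (which is immediate from the definition), and similarly keep all the $H^0$'s ``computed'' so that the tensor-product identifications used in taking cohomology of the twisted sequence are valid. Neither step involves any computation beyond the long exact cohomology sequence.
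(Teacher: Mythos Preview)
Your proposal is correct and follows essentially the same approach as the paper: reduce to the case $M=L_i$ by peeling off one factor at a time, then tensor the defining exact sequence of $\VV_i$ by $L$ and use the vanishing $H^1(\VV_i\ot L)=0$ from Proposition~\ref{V-positive-vanishing-prop}. The paper's proof is simply a terser version of yours (it states the reduction without spelling out the induction), and your remark about the bookkeeping needed for the reduction is accurate.
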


\begin{proof}
It is enough to check that for any positive $\psi$-bundle $L$ and any $i$, the multiplication map $H^0(L_i)\ot H^0(L)\to H^0(L_iL)$ is surjective. By symmetry, we can assume $i=1$.
Now the required surjectivity is obtained from Proposition \ref{V-positive-vanishing-prop}, using the long exact cohomology sequence associated with the exact sequence
$$0\to \VV_1\ot L\to H^0(L_1)\ot L\to L_1L\to 0.$$
\end{proof}

\section{Proofs of Theorem A and Theorem B}

For each vector $v\in \Z^n$, let us denote by $A_n\lan v\ran\sub A_n$ the corresponding graded component. 
By Corollary \ref{surj-cor}, we know that the algebra $A_n$ is generated by its components $A_n\lan e_i\ran\simeq V^{(i)}$, $1\le i\le n$.
In other words, $A_n$ is isomorphic to the quotient $S/I$, where 
$$S=S(V^{(1)}\oplus\ldots\oplus V^{(n)})$$
is the polynomial algebra, and $I\sub S$ is an ideal.

Let $S_{>0}\sub S$ denote the augmentation ideal. 
Given a homogeneous ideal $J\sub (S_{>0})^2\sub S$, with the corresponding quotient $A=S/J$, we consider the $A$-module
\begin{equation}\label{K-module-def}
K:=\ker(A\otimes A\lan e_1\ran \to A(e_1))
\end{equation}
(where () is the degree shift).


\begin{lemma}\label{gen-relations-lem}
In the above situation there is a natural exact sequence
$$\k^{(n-2)(n-3)/2}(-e_1)\to K\otimes \k\rTo{f} (J(e_1)\otimes \k)_{\ge 0}\to 0.$$
\end{lemma}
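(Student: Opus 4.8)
The plan is to unravel the definitions so that Lemma \ref{gen-relations-lem} becomes a statement about the minimal generators of $J$ in low degree, compared with the relations carried by $K$. Set $A = S/J$ and recall $A\lan e_1\ran = V^{(1)}$. The map $A\otimes A\lan e_1\ran\to A(e_1)$ is multiplication by the degree-$e_1$ part, so $K = \ker(A\otimes V^{(1)}\to A(e_1))$ is the first syzygy module of $A(e_1)$ with respect to the generators $V^{(1)}$. My first step is to lift everything to $S$: inside $S\otimes V^{(1)} = S(e_1)\cdot V^{(1)}$ we have the kernel $\wt K$ of $S\otimes V^{(1)}\to S(e_1)$, which is the Koszul-type syzygy module generated in degree $e_i + e_1$ ($i\ne 1$) by the classical relations $v^{(1)}_j\otimes v^{(1)}_k - v^{(1)}_k\otimes v^{(1)}_j$ among commuting variables; more precisely, since the variables in $V^{(1)}$ form a regular sequence in the relevant sense, $\wt K$ is the image of $\bigwedge^2 V^{(1)}\otimes S$. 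Then $K$ is the image of $\wt K$ in $A\otimes V^{(1)}$, together with the contribution of $J\otimes V^{(1)}$.

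The core of the argument is the snake/five-term diagram chase comparing the two short exact sequences $0\to \wt K\to S\otimes V^{(1)}\to S(e_1)\to 0$ and $0\to K\to A\otimes V^{(1)}\to A(e_1)\to J(e_1)\to 0$ — note the last term, since $S\twoheadrightarrow A$ is not split and $A(e_1)$ is a quotient of $S(e_1)$ by $J(e_1)$. Tensoring the surjection $S\otimes V^{(1)}\twoheadrightarrow A\otimes V^{(1)}$ with $\k$ and tracking kernels, one gets that $K\otimes\k$ is an extension controlled by $J\otimes V^{(1)}\otimes\k$ (the ``new'' relations coming from generators of $J$, each multiplied by $V^{(1)}$) and by the image of $\wt K\otimes\k = \bigwedge^2 V^{(1)}$ under the comparison map. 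The cokernel of $f\colon K\otimes\k\to (J(e_1)\otimes\k)_{\ge 0}$ vanishes precisely because every minimal generator of $J$ in degree $e_i+e_j$ produces, after multiplying by any $v^{(1)}_k$, an element of $K$ whose class surjects onto that generator shifted by $e_1$; here one uses that $J\subset (S_{>0})^2$ so $J(e_1)_{\ge 0}$ is spanned by such products (there are no degree-$e_1$ elements of $J$). The kernel of $f$ is then the image of $\wt K\otimes\k=\bigwedge^2 V^{(1)}$, but one must mod out by the part of $\bigwedge^2 V^{(1)}$ that already maps into $(J\otimes V^{(1)})\otimes\k$ inside $K\otimes\k$ — that is, the Koszul relations that become consequences of $J$ — and the leftover is exactly $(n-2)(n-3)/2$-dimensional, which is $\dim\bigwedge^2 V^{(1)}$ since $\dim V^{(1)} = n-2$. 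So the map $\k^{(n-2)(n-3)/2}(-e_1) = \bigwedge^2 V^{(1)}(-e_1)\to K\otimes\k$ is the natural one induced by $\wt K\hookrightarrow S\otimes V^{(1)}$, and exactness at $K\otimes\k$ is the statement that its image is exactly $\ker f$.

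To make this precise I would argue degree by degree: in total degree $2$ the sequence $\wt K\to S\otimes V^{(1)}\to S(e_1)$ in the relevant graded pieces is just linear algebra on $\bigwedge^2 V^{(1)}\oplus(\text{other variables})\otimes V^{(1)}$, and $J$ contributes nothing in degree $\le 1$, so one checks the four-term sequence by counting dimensions using that $A$ and $S$ agree through degree $1$ and $J$ is generated in degrees $\ge 2$. In total degree $3$ and higher the term $(J(e_1)\otimes\k)_{\ge 0}$ picks up the minimal generators of $J$, and the surjectivity of $f$ there is the assertion that a minimal generator $\rho\in J\lan e_i+e_j\ran$, multiplied by a suitable $v^{(1)}_k\in V^{(1)}$, lies in $K$ and represents $\rho$ — using that multiplication by a single variable from $V^{(1)}$ is injective modulo lower-weight terms, which holds because $v^{(1)}_k$ is a nonzerodivisor on $S$ hence the product $v^{(1)}_k\rho$ is a nonzero lift.

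The main obstacle, I expect, is not the surjectivity of $f$ but pinning down $\ker f$ exactly — i.e.\ showing the comparison map from $\bigwedge^2 V^{(1)}$ into $K\otimes\k$ is injective with image precisely $\ker f$, rather than some quotient. This requires knowing that none of the Koszul relations $v^{(1)}_j\wedge v^{(1)}_k$ becomes redundant in $A$ at the level of $K\otimes\k$, equivalently that the only syzygies of $A(e_1)$ in the minimal-generator degrees beyond the obvious commutativity ones are the pullbacks of generators of $J$. In the intended application $J = (\sum_{i<j}R^{(i,j)})$, and one would invoke the explicit form of the relations \eqref{psi-mod-rel}/\eqref{more-pts-eq} together with the module structure \eqref{univ-curve-module-eq} of $\OO(\wt\CC_n)$ over $B_n$ — indeed Lemma \ref{rel-curve-lem} identifies $A\otimes A\lan e_1\ran\to A(e_1)$, in the relevant graded degrees, with a piece of the ring $B_{n,1}=\OO(\wt\CC_n^{1})$, whose relations \eqref{fi-fj-eq} make the syzygy computation transparent. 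So in the end the dimension count $(n-2)(n-3)/2 = \binom{\dim V^{(1)}}{2}$ falls out of the explicit presentation, and the exact sequence is obtained by assembling the degree-wise statements.
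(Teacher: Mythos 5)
Your skeleton agrees with the paper's proof --- the left-hand term is spanned by the classes of the Koszul syzygies among a basis $(x_i)$ of $V^{(1)}$, and $f$ is the evident comparison map to the minimal generators of $J$ --- but neither substantive step is carried out correctly. For surjectivity, your recipe (multiply a minimal generator $\rho$ by some $v^{(1)}_k$) produces an element of $S_{>0}\cdot J$, which is zero in $J\ot\k$; so $(J(e_1)\ot\k)_{\ge 0}$ is \emph{not} spanned by such products, and the corresponding element $\ov{\rho}\ot v^{(1)}_k$ of $A\ot V^{(1)}$ is itself zero because $\rho\in J$. The correct (and easy) argument goes the other way: if $F\in J$ has degree with positive $e_1$-component, every monomial of $F$ is divisible by some $x_i$, so $F=\sum_i\wt c_i x_i$ in $S$, and the class of $\sum_i c_i\ot x_i\in K$ (with $c_i$ the image of $\wt c_i$) maps to the class of $F$. (Also, your four-term sequence $0\to K\to A\ot V^{(1)}\to A(e_1)\to J(e_1)\to 0$ is not right: the cokernel of the multiplication map is $(A/V^{(1)}A)(e_1)$, not $J(e_1)$.)

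The genuine gap is exactness at $K\ot\k$, which you identify as the main obstacle and then propose to settle by invoking the explicit relations \eqref{psi-mod-rel}, \eqref{more-pts-eq} and Lemma \ref{rel-curve-lem}. That cannot prove the Lemma: it is stated for an \emph{arbitrary} homogeneous ideal $J\sub (S_{>0})^2$, and in the proof of Theorem A it is applied to the ideals of relations of $A_n^-$ and of $A_n$ --- for $A_n$ this ideal is precisely what Theorem A is in the process of determining, so appealing to the presentation by the $R^{(i,j)}$ would be circular, and degree-by-degree dimension counts are unavailable since the Hilbert function of $S/J$ is unknown in this generality. What is missing is the elementary rewriting argument the paper uses: if $\sum_i\wt c_i x_i=\sum_j a_jr_j$ with $r_j\in J$ and $\deg a_j>0$, then any term with $\deg_{e_1}a_j>0$ can be absorbed into a change of the lifts $\wt c_i$ (leaving the $c_i$ unchanged), while any term with $\deg_{e_1}r_j>0$ equals $a_j$ times an element of $K$ and so can be dropped without changing the class in $K\ot\k$; iterating reduces to $\sum_i\wt c_i x_i=0$ in $S$, i.e., to a combination of Koszul syzygies. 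Note finally that the Lemma does not assert injectivity of the left-hand map, so your concern about Koszul relations ``becoming redundant'' and the image being exactly $(n-2)(n-3)/2$-dimensional asks for more than is needed (and is not established by your argument either).
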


\begin{proof}
Let $(x_i)$ be a basis of $A\lan e_1\ran=V^{(1)}$.
Given an element $\sum_i c_i\ot x_i\in K$ we lift $c_i$ to elements $\wt{c}_i\in S$ and set 
$$f(\sum_i c_i\ot x_i)=\sum_i \wt{c}_i\cdot x_i\in J \mod S_{>0}\cdot J.$$
This gives a well defined map $f:K\otimes \k\to (J(e_1)\otimes \k)_{\ge 0}$. Indeed, if we change our liftings $\wt{c}_i$,
the above element will change by $\sum_i F_i\cdot x_i$ with $F_i\in J$.
It is also clear that $f$ is surjective. Indeed, given an element $F\in J$ of degree $\ge e_1$, we can write 
$$F = \sum_i \wt{c}_i x_i$$
in the ambient polynomial ring. Let $c_i\in A$ denote the image of $\wt{c}_i$. Then $\sum_i c_i\ot x_i$ is in $K$ and $f(\sum_i c_i\ot x_i)\equiv F$.

It remains to prove exactness in the middle term. Suppose $\sum_i c_i\ot x_i\in K$ is such that $\sum_i \wt{c}_i\cdot x_i\in S_{>0}\cdot J$,
where $\wt{c}_i\in S$ are liftings of $c_i\in A$. Then we can write
\begin{equation}\label{ci-xi-aj-rj-eq}
\sum_i \wt{c}_i x_i = \sum_j a_j r_j,
\end{equation}
for elements $r_j\in J$ and $\deg(a_j)>0$.

Consider a term $a_jr_j$ of the right-hand side.  Since $\deg_{e_1}(a_jr_j)>0$, at least one of $a_j$ or $r_j$ has positive $e_1$-degree. 
If $\deg_{e_1}(a_j)>0$, choose an expression $a_j=\sum_i b_i x_i$ and write
$$a_jr_j = \sum_i b_i x_i r_j = \sum_i (b_i r_j) x_i.$$
Subtracting that expression from the left-hand side of \eqref{ci-xi-aj-rj-eq} gives different lifts $(\wt{c}_i)$ of $(c_i)$, but with one fewer term on the right.

Similarly, if $\deg_{e_1}(r_j)>0$, choose an expression $r_j = \sum_i s_i x_i$ and write
$$a_jr_j = \sum_i b_i x_i r_j = a_j (\sum_i s_i x_i).$$
Here $\sum_i s_i x_i$ maps to an element of $J$ and thus $\vec{s}$ is an element of $K$ of lower degree, so that subtracting $a_j\vec{s}$ again eliminates a term from the right 
without changing an element of $K\ot \k$ we started from.

Thus, we proved that any element in $\ker(f)$ can be represented by some $(\wt{c}_i)$ with $\sum_i \wt{c}_i x_i=0$ in the ambient polynomial ring.  
But such an element is a linear combination of the standard syzygies between $(x_i)$ in the polynomial ring.
\end{proof}

Let us consider the following subalgebra in $A_n$:
$$A^-_n = \bigoplus_{a_1\ge 0,\ldots,a_{n-1}\ge 0} H^0(\ov{M}_{0,n},L_1^{a_1}\ldots L_{n-1}^{a_{n-1}}).$$
In other words, this is just the sum of graded components corresponding to $\Z e_1+\ldots \Z e_{n-1}\sub \Z^n$.

Now let $B_n$ (resp., $B^-_n$) denote the algebras with the same generators as $A_n$ (resp., $A^-_n$) 
but with only bilinear relations $R^{(i,j)}$ imposed (with $i<j<n$ in the case of $B^-_n$), so that we have surjections 
$$B_n\to A_n, \ \ B^-_n\to A^-_n,$$ 
and the desired theorem is that the map $B_n\to A_n$ is an isomorphism.  

Note that $B_n=\OO(\wt{\UU}_{0,n}[\psi])$ and
$B^-_n=B_{n-1,1}$ (see  Sec.\ \ref{psi-curves-sec}).

\begin{lemma}\label{B-module-lem}
There is a natural homomorphism $B_{n-1}\to B^-_n$ such that
$$B^-_n\simeq B_{n-1}\oplus \bigoplus_{1\le i\le n-1}\bigoplus_{a>0} B_{n-1}(-ae_i)$$
as a $B_{n-1}$-module.
\end{lemma}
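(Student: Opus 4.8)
The plan is to recognize both $B^-_n$ and the $B_{n-1}$-module it is supposed to be as objects already made explicit in Section~\ref{psi-curves-sec}, so that the lemma becomes a formal consequence of Lemma~\ref{rel-curve-lem} and the module decomposition \eqref{univ-curve-module-eq}; essentially no new computation is needed.

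The steps, in order, are as follows. First I would use the identification $B^-_n=B_{n-1,1}$ noted just above and apply Lemma~\ref{rel-curve-lem} with $n$ replaced by $n-1$ and $m=1$: since the first relative cartesian power $\wt{\CC}^1_{n-1}$ is nothing but the universal affine curve $\wt{\CC}_{n-1}$ over $\wt{\UU}_{0,n-1}[\psi]$, this produces a $\Z^{n-1}$-graded isomorphism $B^-_n\simeq\OO(\wt{\CC}_{n-1})$. Second, I would take for the natural homomorphism $B_{n-1}\to B^-_n$ the pull-back $\OO(\wt{\UU}_{0,n-1}[\psi])\to\OO(\wt{\CC}_{n-1})$ along the structure morphism of the universal affine curve (equivalently, the restriction to $B^-_n=B_{n-1,1}$ of the homomorphism $B_{n-1}\to B_n$ coming from forgetting the $n$-th marked point and tangent vector, cf.\ the Remark after Lemma~\ref{rel-curve-lem}); its naturality is manifest. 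Third, I would invoke \eqref{univ-curve-module-eq}, applied with $n$ replaced by $n-1$: it asserts precisely that $1$ together with the powers $(f_i^a)$, $1\le i\le n-1$ and $a\ge 1$, with $f_i^a$ homogeneous of degree $ae_i$, form a basis of $\OO(\wt{\CC}_{n-1})$ over $R=\OO(\wt{\UU}_{0,n-1}[\psi])$. Transporting this statement through the two identifications above gives $B^-_n\simeq B_{n-1}\oplus\bigoplus_{1\le i\le n-1}\bigoplus_{a>0}B_{n-1}(-ae_i)$ as $\Z^{n-1}$-graded $B_{n-1}$-modules, which is the assertion.

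I do not anticipate a real obstacle; the lemma is in the end just a re-packaging of \eqref{univ-curve-module-eq}. The only things to watch are bookkeeping points: one must check that the $\Z^{n-1}$-grading carried by $B^-_n$ as a subalgebra of $B_n$ (the components of degree in $\Z e_1+\cdots+\Z e_{n-1}$) agrees, under Lemma~\ref{rel-curve-lem}, with the grading on $\OO(\wt{\CC}_{n-1})$ for which $\deg f_i=e_i$ — but this is exactly the normalization of the grading used in Section~\ref{psi-curves-sec}, already built into \eqref{more-pts-eq} — and one must keep the degree shifts straight, the point being that the free summand $R\cdot f_i^a$ is isomorphic to $B_{n-1}(-ae_i)$ because $f_i^a$ lies in degree $ae_i$. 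With these checks in place the proof is complete.
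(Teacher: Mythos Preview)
Your proposal is correct and follows exactly the paper's own proof: identify $B^-_n=B_{n-1,1}\simeq\OO(\wt{\CC}_{n-1})$ via Lemma~\ref{rel-curve-lem} (with $n-1$ in place of $n$ and $m=1$), take the homomorphism $B_{n-1}\to B^-_n$ to be $\OO(\wt{\UU}_{0,n-1}[\psi])\to\OO(\wt{\CC}_{n-1})$, and read off the module decomposition from \eqref{univ-curve-module-eq}. The paper's proof is the same argument in two sentences; your additional bookkeeping remarks are accurate but not strictly needed.
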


\begin{proof}
By Lemma \ref{rel-curve-lem}, we can identify the embedding of $\Z^{n-1}$-graded algebras $B_{n-1}\hra B_n^-$ with the natural homomorphism
$$\OO(\wt{\UU}_{0,n-1}[\psi])\to\OO(\wt{\CC}_{n-1}).$$
Thus, the assertion follows from \eqref{univ-curve-module-eq} (for $n-1$ instead of $n$).
\end{proof}


\noindent
\begin{proof}[Proof of Theorem A]
We will use induction on $n$. For $n=3$, the assertion is clear: $A_3$ is just the algebra of polynomials in $3$ variables.

By the induction assumption, we know that $B_{n-1}\to A_{n-1}$ is an isomorphism, and by Lee's formula \eqref{Lee-formula}, this algebra has the Hilbert series
$$h_{A_{n-1}}=\frac{(1+\sum_{1\le i\le n-1} q_i/(1-q_i))^{n-4}}{\prod_{1\le i\le n-1}(1-q_i)},$$
whereas by the same formula, $A_n^-$ has the Hilbert series
$$h_{A_n^-}=\frac{(1+\sum_{1\le i\le n-1} q_i/(1-q_i))^{n-3}}{\prod_{1\le i\le n-1}(1-q_i)}.$$
Using this together with Lemma \ref{B-module-lem}, we find that $B^-_n$ has the Hilbert series
$$h_{B^-_n}=(1+\sum_{i=1}^{n-1} q_i/(1-q_i)) h_{B_{n-1}}=(1+\sum_{i=1}^{n-1} q_i/(1-q_i)) h_{A_{n-1}}
=h_{A^-_n}.$$
Hence, the map $B^-_n\to A^-_n$ is an isomorphism. Hence, the minimal generators of the ideal of relations in $A^-_n$ are bilinear.  

Let $K_n$ (resp., $K_n^-$) denote the $A_n$-module (resp., $A_n^-$-module) given by \eqref{K-module-def} for $A=A_n$ (resp., $A=A_n^-$).
Then for $a_1,\ldots,a_n\ge 0$, we have a natural identification
\begin{equation}\label{Kn-V1-eq}
K_n\lan \sum_{i=1}^n a_ie_i\ran\simeq H^0(\ov{M}_{0,n},\VV_1\ot L_1^{a_1}\ldots L_n^{a_n}),
\end{equation}
and $K_n^-$ is the sum of graded components in $K_n$ corresponding to $\Z e_1+\ldots \Z e_{n-1}\sub \Z^n$. 


By Lemma \ref{gen-relations-lem} applied to the ideal of relations in $A^-_n$, we deduce that $K_n^-\ot \k$ is supported in degrees $e_i$, $i\le n-1$.
In particular, we know that the map
$$K_n\lan \sum_{i=1}^{n-1} a_i e_i\ran \otimes A_n\lan e_2\ran \oplus K_n\lan e_2\ran \otimes A_n\lan \sum_{i=1}^{n-1} a_i e_i\ran
\to K_n\lan \sum_{i=1}^{n-1} a_i e_i+e_2\ran$$
is surjective for $a_1,\dots,a_{n-1}\ge 0$.  

This means that the complex 
$$\VV_1\otimes H^0(L_2)\oplus H^0(\VV_1\ot L_2)\ot \OO \to \VV_1\ot L_2$$
(concentrated in degrees $[0,1]$) satisfies the assumptions of Corollary \ref{hypercoh-cor} (here we also use \eqref{Kn-V1-eq} together with 
Propositions \ref{L-positive-vanishing-prop} and \ref{V-positive-vanishing-prop}).
Therefore, applying this Corollary, we deduce surjectivity of
$$K_n\lan \sum_{i=1}^{n} a_i e_i\ran \otimes A_n\lan e_2\ran \oplus K_n\lan e_2\ran \otimes A_n\lan \sum_{i=1}^n a_i e_i\ran
\to K_n\lan \sum_{i=1}^{n} a_i e_i+e_2\ran$$
for $a_1,\dots,a_n\ge 0$.  But this tells us that $K_n\ot \k$ is zero in degrees $>e_2$. Applying Lemma \ref{gen-relations-lem} to the ideal of relations in $A^-_n$, we deduce 
that the only minimal relations of $A_n$ of degree $\ge e_1+e_2$ have degree $e_1+e_2$.  
Since there are no relations of degree $a e_1$, symmetry tells us that {\em all} minimal relations are bilinear.

Finally, formula \eqref{Lee-formula} shows that the coefficient of $q_iq_j$, for $i\neq j$, in the Hilbert series of $A_n$ is equal to $(n-2)^2-(n-3)$.
Hence, the dimension of the space of relations of degree $e_i+e_j$ for $A_n$ is $n-3$, same as for $B_n$, which implies the required isomorphism.
\end{proof}

\noindent
\begin{proof}[Proof of Theorem B]
We use a result of Hyry \cite{Hyry} stating that
if $Z$ is a projective scheme with line bundles $L_1,\ldots,L_n$ such that
the $\Z^n$-graded algebra
$$S = \bigoplus_a H^0(Z, L_1^{a_1}\ldots L_n^{a_n})$$
is generated in degree $1$, the augmentation ideal $S_{>0}$ has positive height and the following vanishing is satisfied:
$$H^i(Z, L_1^{a_1}\ldots L_n^{a_n}) = 0 \ \text{ for } i>0, \ a_1,\ldots,a_n\ge 0,$$
$$H^i(Z, L_1^{a_1}\ldots L_n^{a_n}) = 0 \ \text{ for } i<\dim(Z),  \ a_1,\dots,a_n<0,$$
then $S$ is Cohen-Macaulay.

In our case (with $Z=\ov{M}_{0,n}$), we already know all these conditions except for the last vanishing. By the Serre duality, it is equivalent to
\begin{equation}\label{CM-Pand-van-eq}
H^{>0}(\ov{M}_{0,n},\om_{\ov{M}_{0,n}}\ot L_1^{a_1}\ldots L_n^{a_n})=0 \ \text{ for } a_1,\ldots,a_n>0.
\end{equation}
Thus, by the Pandharipande's vanishing (see Remark \ref{Pand-van-rem}), it is enough to check that the line bundle $\om_{\ov{M}_{0,n}}\ot L_1\ot\ldots \ot L_n$ on $\ov{M}_{0,n}$
is a weakly positive $\psi$-bundle.
One has 
$$\om_{\ov{M}_{0,n}}\simeq L_1\ldots L_n(-2\de),$$
where $\de$ is the sum of all boundary divisors (see \cite[Thm.\ (7.15)]{ACG}). Let us consider the log canonical divisor $\kappa=\om_{\ov{M}_{0,n}}(\de)$.
Then the above isomorphism is equivalent to
$$\kappa(\de)\simeq L_1\ldots L_n.$$
It follows from \cite[Lem.\ 2.5]{KT} that $\kappa$ is a weakly positive $\psi$-bundle.
Hence, the line bundle
$$\om_{\ov{M}_{0,n}}\ot L_1\ot\ldots \ot L_n\simeq \kappa(-\de)\ot \kappa(\de)\simeq \kappa^2$$
is a weakly positive $\psi$-bundle, as claimed.

Smoothness in codimension $\le 4$ was proved in \cite[Prop.\ 1.2.1]{P-g1}. Normality follows by Serre's criterion.
\end{proof}

\section{Related varieties and rational resolutions}\label{rat-res-sec}

Let 
$$f_n:\ov{M}_{0,n}\to \P H^0(L_1)^\vee\times\ldots\times \P H^0(L_n)^\vee\simeq (\P^{n-3})^n$$
denote the morphism associated with the linear systems $|L_1|,\ldots,|L_n|$. Note that the projection to each factor $\P^{n-3}$
is the birational morphism considered by Kapranov in \cite{Kapranov}.
Let $X_n\sub (\P^{n-3})^n$ denote the image of $f_n$, so that $f_n$ factors through a proper birational morphism $\pi_n:\ov{M}_{0,n}\to X_n$.
We can view $X_n$ as a subscheme in a projective space via the Segre embedding.

Recall that for a variety $X$, a resolution of singularities $\phi:Y\to X$ is called a {\it rational resolution} if $\OO_X\simeq\phi_*\OO_Y$,
$R^{>0}\phi_*\OO_Y=0$ and $R^{>0}\phi_*\om_Y=0$ (see \cite[Def.\ 2.76]{Kollar}).

\begin{prop}\label{rat-resolution-prop}
One has an identification of $X_n$ with the moduli space $\ov{M}_{0,n}[\psi]$ of $\psi$-stable curves, and of $A_n$ with the algebra
$\bigoplus_{a_1,\ldots,a_n}H^0(X_n,\OO(a_1,\ldots,a_n))$.
Furthermore, $\pi_n:\ov{M}_{0,n}\to X_n$ is a rational resolution. 
\end{prop}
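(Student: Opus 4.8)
The plan is to first identify $X_n$ with $\ov{M}_{0,n}[\psi]$ and $A_n$ with the multihomogeneous coordinate ring of $X_n$, and then to verify the three defining properties of a rational resolution for $\pi_n$, leveraging the cohomological results of Sections~3--4.

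\textbf{Identification of $X_n$ and its coordinate ring.} Theorem A identifies $A_n$ with $B_n=\OO(\wt{\UU}_{0,n}[\psi])$, and the discussion in Section~\ref{psi-curves-sec} shows that $\Spec(B_n)=\wt{\UU}_{0,n}[\psi]$ carries a $\G_m^n$-action whose GIT quotient (with respect to the linearization given by the characters $e_1,\ldots,e_n$) is $\ov{M}_{0,n}[\psi]$. Since $A_n$ is generated in degree $1$ by Corollary~\ref{surj-cor}, the $\Z^n$-graded $\Proj$ of $A_n$ is exactly the image of the morphism $f_n$ defined by the complete linear systems $|L_1|,\ldots,|L_n|$ — that is, $X_n$ — and simultaneously it is the GIT quotient $[\wt{\UU}_{0,n}[\psi]/\!\!/\G_m^n]=\ov{M}_{0,n}[\psi]$. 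This gives the first two assertions; in particular $X_n$ is Cohen--Macaulay and normal by Theorem~B, since these properties pass from $\Spec(A_n)$ to $\Proj(A_n)$ (they hold on the affine cone minus the vertex, hence on $X_n$, after checking the semistable locus is all of the punctured cone, or equivalently that $A_n$ satisfies Serre's conditions away from the irrelevant locus).

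\textbf{Verifying the rational resolution properties.} First, $\pi_n$ is a proper birational morphism from the smooth variety $\ov{M}_{0,n}$ (each coordinate projection $\ov{M}_{0,n}\to\P^{n-3}$ is Kapranov's birational morphism, so $f_n$ is birational onto its image). The condition $\OO_{X_n}\simeq(\pi_n)_*\OO_{\ov{M}_{0,n}}$ follows from normality of $X_n$: a proper birational morphism to a normal variety has this property. For $R^{>0}(\pi_n)_*\OO_{\ov{M}_{0,n}}=0$: since $X_n=\Proj(A_n)$ and the $L_i$ are (relatively) ample for $\pi_n$, by the theorem on formal functions / Serre it suffices to show that $H^{>0}(\ov{M}_{0,n},L_1^{a_1}\cdots L_n^{a_n})$ vanishes for all $a_i\gg 0$; but Proposition~\ref{L-positive-vanishing-prop} gives this for \emph{all} $a_i\ge 0$, and moreover the surjectivity $H^0(X_n,\OO(a))\to H^0(\ov{M}_{0,n},L_1^{a_1}\cdots L_n^{a_n})$ is an equality by the $\Proj$ identification, so $R^{>0}(\pi_n)_*(L_1^{a_1}\cdots L_n^{a_n})=0$ for $a_i\gg0$, which forces $R^{>0}(\pi_n)_*\OO=0$.

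\textbf{Vanishing of $R^{>0}(\pi_n)_*\om_{\ov{M}_{0,n}}$ — the main obstacle.} By the same relative-ample argument this amounts to showing $H^{>0}(\ov{M}_{0,n},\om_{\ov{M}_{0,n}}\ot L_1^{a_1}\cdots L_n^{a_n})=0$ for $a_i\gg0$ — indeed for all $a_i>0$, which is precisely the vanishing \eqref{CM-Pand-van-eq} established in the proof of Theorem~B, where $\om_{\ov{M}_{0,n}}\ot L_1\cdots L_n\simeq\kappa^2$ is shown to be a weakly positive $\psi$-bundle and Pandharipande's vanishing (Remark~\ref{Pand-van-rem}) applies. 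So this step is essentially already done; the only real work is organizing the relative statement, i.e.\ checking that $(\pi_n)_*\om_{\ov{M}_{0,n}}$ computes the canonical/dualizing sheaf of $X_n$ in the relevant sense and that the higher direct images being zero follows from the absolute vanishing of $H^{>0}$ of all sufficiently positive twists. The subtlety I expect to spend the most care on is the passage between the $\Z^n$-graded $\Proj$ picture and the geometry of $f_n$: one must ensure $X_n$ is actually $\Proj(A_n)$ and not merely dominated by it, which uses generation in degree $1$ (Corollary~\ref{surj-cor}) together with the fact that $A_n$ has no relations forcing a smaller image — but this is exactly the content of Theorem~A combined with Lee's Hilbert series formula \eqref{Lee-formula}, so it is available. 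A secondary point is to spell out the GIT identification $[\wt{\UU}_{0,n}[\psi]/\!\!/\G_m^n]\simeq\ov{M}_{0,n}[\psi]$, for which we cite \cite[Sec.\ 19.3]{P-lectures} and \cite[Sec.\ 2.3]{FS}.
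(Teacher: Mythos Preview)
Your proposal is correct and uses the same cohomological inputs as the paper (Pandharipande's vanishing for $R^{>0}\pi_{n*}\OO$ and $R^{>0}\pi_{n*}\om$, Theorems A and B for the identifications), but organizes the remaining two points differently. For $\OO_{X_n}\simeq\pi_{n*}\OO$, you invoke normality of $X_n$ via Theorem~B after first identifying $X_n$ with the multigraded $\Proj(A_n)$; the paper instead argues directly that the composition $S\to\bigoplus H^0(X_n,\OO(a))\hookrightarrow\bigoplus H^0(X_n,\pi_{n*}\OO\ot\OO(a))\simeq A_n$ is surjective (by Theorem~A), forcing the inclusion in the middle to be an equality and hence $\OO_{X_n}\to\pi_{n*}\OO$ to be an isomorphism. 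This is shorter and sidesteps the multigraded $\Proj$/GIT bookkeeping you flag as the main subtlety. For the identification $X_n\simeq\ov{M}_{0,n}[\psi]$, you go through the GIT quotient description $[\wt{\UU}_{0,n}[\psi]/\!\!/\G_m^n]$, whereas the paper observes that Theorem~A cuts $X_n$ out of $(\P^{n-3})^n$ by exactly the quadratic relations \eqref{psi-mod-rel} and then cites \cite[Cor.\ 19.3.2]{P-lectures}, which already identifies that subscheme with $\ov{M}_{0,n}[\psi]$. Your route is more self-contained in spirit but requires care with the semistable locus; the paper's route is a one-line citation once Theorem~A is in hand.
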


\begin{proof}
The vanishing of $R^{>0}\pi_{n*}\OO$ (resp., of $R^{>0}\pi_{n*}\om_{\ov{M}_{0,n}}$) follows from the vanishing 
$$H^{>0}(\ov{M}_{0,n},(L_1\ldots L_n)^N)=H^{>0}(\ov{M}_{0,n},\om_{\ov{M}_{0,n}}\ot(L_1\ldots L_n)^N)=0$$
for $N>1$, which follows from the Pandharipande's vanishing (see the proof of Theorem B).
By Theorem A, the composition
$$S\to \bigoplus_{a_1,\ldots,a_n\ge 0}H^0(X_n,\OO(a_1,\ldots,a_n))\hra
\bigoplus_{a_1,\ldots,a_n\ge 0}H^0(X_n,\pi_{n*}\OO_{\ov{M}_{0,n}}\ot \OO(a_1,\ldots,a_n))\simeq A_n$$
is a surjection, which implies that the second arrow is an isomorphism, and hence, so is the map $\OO_{X_n}\to\pi_{n*}\OO$.
Also, Theorem A implies that $X_n$ is scheme-theoretically cut out in $(\P^{n-3})^n$ by the quadratic relations \eqref{psi-mod-rel}. Hence, by 
\cite[Cor.\ 19.3.2]{P-lectures}, we obtain the identification of $X_n$ with 
$\ov{M}_{0,n}[\psi]$. 
\end{proof}

\begin{remark}
Proposition \ref{rat-resolution-prop} gives another way to prove the CM property of the moduli space $\wt{\UU}_{0,n}[\psi]$ for all $n$.
Indeed, using the rational resolution $\pi_n$ we deduce that all moduli spaces $\ov{M}_{0,n}[\psi]$ are CM (see \cite[Prop.\ 2.77]{Kollar}).
Now, by the standard argument (see e.g., \cite[Lem.\ 2.1]{Smyth}), the CM property of $\wt{\UU}_{0,n}[\psi]$ for all $n$ is equivalent to the CM property of the miniversal deformation spaces of the rational $n$-fold singularity for all $n$. But the rational $n$-fold singularity occurs for the $\psi$-stable curve in $\ov{M}_{0,2n}[\psi]$, given as the union of
$n$ irreducible components forming the rational $n$-fold singularity, where each component has two marked points. Now the CM property of $\ov{M}_{0,2n}[\psi]$ implies the
CM property for the deformation space of the rational $n$-fold singularity.
\end{remark}

More generally, one can  consider the $\Z^n$-graded algebra
$$A_{n,m}:=\bigoplus_{a_1,\ldots,a_n\ge 0}H^0(\ov{M}_{0,n+m},L_1^{a_1}\ldots L_n^{a_n}).$$
Theorem A together with Lemma \ref{rel-curve-lem} give an isomorphism
$$A_{n,m}\simeq B_{n,m}\simeq \OO(\wt{\CC}^m_n).$$

\begin{prop}\label{rat-resolution-prop-bis}
The scheme $\Spec(A_{n,m})\simeq \wt{\CC}^m_n$ (the $m$th relative cartesian power of $\wt{\CC}_n\to \wt{\UU}_{0,n}[\psi]$) is Cohen-Macaulay and normal.
Let $X_{n,m}$ denote the image of the morphism 
$$f_{n,m}:\ov{M}_{0,n+m}\to \P H^0(L_1)^\vee\times\ldots\times \P H^0(L_n)^\vee.$$
Then 
\begin{equation}\label{Amn-Xmn-eq}
A_{n,m}\simeq \bigoplus_{a_1,\ldots,a_n\ge 0} H^0(X_{n,m},\OO(a_1,\ldots,a_n)),
\end{equation}
and the projection $\pi_{n,m}:\ov{M}_{0,n+m}\to X_{n,m}$ is a rational resolution.
\end{prop}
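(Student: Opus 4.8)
The plan is to deduce Proposition \ref{rat-resolution-prop-bis} from the already-established Proposition \ref{rat-resolution-prop-bis}'s analogue for $m=0$ (that is, Proposition \ref{rat-resolution-prop} and Theorem B) by exploiting the product/fibration structure of $\wt{\CC}^m_n$ over $\wt{\UU}_{0,n}[\psi]$, together with the description of $B_{n,m}$ as a $B_n$-module obtained by iterating Lemma \ref{B-module-lem} (or directly from \eqref{univ-curve-module-eq} applied $m$ times). First I would record that, by Lemma \ref{rel-curve-lem} and the iterated form of \eqref{univ-curve-module-eq}, $A_{n,m}\simeq\OO(\wt{\CC}^m_n)$ is a free $B_n=\OO(\wt{\UU}_{0,n}[\psi])$-module (the grading shifts being the $-a e_i$ with $a>0$, $1\le i\le n$, taken $m$-fold). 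Freeness over a Cohen-Macaulay ring preserves the Cohen-Macaulay property, so $\Spec(A_{n,m})$ is Cohen-Macaulay once we know $\Spec(B_n)$ is, which is Theorem B. For normality I would use Serre's criterion $R_1+S_2$: $S_2$ is automatic from Cohen-Macaulayness, and $R_1$ follows because $\wt{\CC}^m_n\to\wt{\UU}_{0,n}[\psi]$ is a flat family of (at worst nodal, in fact smooth away from the nodes of the fibres, hence lci) curves, so the singular locus of $\wt{\CC}^m_n$ maps into the singular locus of $\wt{\UU}_{0,n}[\psi]$ union the locus where fibre singularities appear; since $\wt{\UU}_{0,n}[\psi]$ is smooth in codimension $\le 4$ and the fibre singularities occur in high codimension, $R_1$ holds. (Alternatively, normality of $\wt{\CC}^m_n$ can be read off directly from the explicit equations \eqref{more-pts-eq}, or bootstrapped from normality of $\wt{\UU}_{0,n}[\psi]$ using that the universal curve over a normal base with geometrically normal — here, geometrically reduced with normal total space — generic fibre is normal.)

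Next I would establish \eqref{Amn-Xmn-eq} and the rational-resolution statement. The morphism $f_{n,m}\colon\ov{M}_{0,n+m}\to(\P^{n-3})^n$ is given by the linear systems $|L_1|,\dots,|L_n|$ pulled back from $\ov{M}_{0,n+m}$, and it factors as a proper birational morphism $\pi_{n,m}\colon\ov{M}_{0,n+m}\to X_{n,m}$ onto its image followed by the inclusion. As in the proof of Proposition \ref{rat-resolution-prop}, the two vanishings $R^{>0}\pi_{n,m*}\OO=0$ and $R^{>0}\pi_{n,m*}\om_{\ov{M}_{0,n+m}}=0$ reduce, via the projection formula and the ampleness of $\OO(1,\dots,1)$ on $X_{n,m}$, to
$$H^{>0}(\ov{M}_{0,n+m},(L_1\cdots L_n)^N)=H^{>0}(\ov{M}_{0,n+m},\om_{\ov{M}_{0,n+m}}\ot(L_1\cdots L_n)^N)=0$$
for $N\gg 0$. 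The first of these is immediate from Pandharipande's vanishing (Proposition \ref{L-positive-vanishing-prop}); the second is where more care is needed, since $\om_{\ov{M}_{0,n+m}}\simeq L_1\cdots L_{n+m}(-2\de)$ involves the extra cotangent lines $L_{n+1},\dots,L_{n+m}$, which are \emph{not} positive $\psi$-bundles on $\ov{M}_{0,n+m}$ when tensored in with a power of $L_1\cdots L_n$ only. I would handle this exactly as in the $m=0$ case: write $\om=\kappa(-\de)$ with $\kappa$ the log-canonical bundle, use that $\kappa$ is a weakly positive $\psi$-bundle by \cite[Lem.\ 2.5]{KT}, and note $L_1\cdots L_{n+m}$ is weakly positive; then for $N\ge 1$ the bundle $\om\ot(L_1\cdots L_n)^N$ is a weakly positive $\psi$-bundle (or differs from one by a $\kappa$-type twist whose higher cohomology still vanishes by Remark \ref{Pand-van-rem}), so its higher cohomology vanishes. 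Having the two $R^{>0}$-vanishings, $\pi_{n,m}$ is a rational resolution provided $\OO_{X_{n,m}}\xrightarrow{\sim}\pi_{n,m*}\OO_{\ov{M}_{0,n+m}}$; this last isomorphism follows, as in Proposition \ref{rat-resolution-prop}, from the surjectivity of the map from the Segre coordinate ring $S'\to\bigoplus H^0(X_{n,m},\OO(a_1,\dots,a_n))$ together with the identification of the full section ring $\bigoplus H^0(X_{n,m},\pi_{n,m*}\OO\ot\OO(a))\simeq A_{n,m}$ coming from Theorem A and Lemma \ref{rel-curve-lem} — the composite being surjective forces the intermediate inclusion to be an equality, whence \eqref{Amn-Xmn-eq} and $\pi_{n,m*}\OO=\OO_{X_{n,m}}$.

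The only genuinely new ingredient beyond bookkeeping is verifying that $X_{n,m}$ really is the image we think it is and that $S'\to\bigoplus H^0(X_{n,m},\OO(a))$ is surjective onto the degree-$(a_1,\dots,a_n)$ part; for this I would invoke Corollary \ref{surj-cor}'s analogue on $\ov{M}_{0,n+m}$ — namely that $S^\bullet H^0(L_1)\ot\cdots\ot S^\bullet H^0(L_n)\to A_{n,m}$ is surjective — which itself follows from Propositions \ref{L-positive-vanishing-prop} and \ref{V-positive-vanishing-prop} applied on $\ov{M}_{0,n+m}$ in exactly the same way as Corollary \ref{surj-cor} (tensoring with the extra bundles $L_{n+1}^{0}\cdots$ changes nothing, since the $\VV_i$-vanishing is stated for all positive $\psi$-bundles $L$). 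Combining this surjectivity with the freeness of $A_{n,m}$ over $A_n$ gives a presentation of $A_{n,m}$ by generators in degrees $e_1,\dots,e_n$ and quadratic relations \eqref{more-pts-eq}, which is what pins down $X_{n,m}$ scheme-theoretically. I expect the main obstacle to be the careful treatment of the weakly-positive-$\psi$-bundle bookkeeping in the $\om\ot(L_1\cdots L_n)^N$ vanishing — one must be sure the forgetful-map pull-back structure is genuinely preserved when only \emph{some} of the cotangent lines are being scaled — but this is a direct adaptation of the corresponding step in the proof of Theorem B rather than anything structurally new.
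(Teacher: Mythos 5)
Your reduction of $R^{>0}\pi_{n,m*}\OO=0$, of $\pi_{n,m*}\OO=\OO_{X_{n,m}}$ and of \eqref{Amn-Xmn-eq} to Theorem A, Corollary \ref{surj-cor} and Proposition \ref{L-positive-vanishing-prop} matches the paper. The genuine gap is exactly the step you defer as ``bookkeeping'': the vanishing $H^{>0}(\ov{M}_{0,n+m},\om_{\ov{M}_{0,n+m}}\ot(L_1\cdots L_n)^N)=0$. In the $m=0$ case the argument hinges on the identity $\om_{\ov{M}_{0,n}}\ot L_1\cdots L_n\simeq\kappa^2$, which uses \emph{all} the cotangent lines; here one gets
$$\om_{\ov{M}_{0,n+m}}\ot(L_1\cdots L_n)^N\simeq\kappa^2\ot(L_1\cdots L_n)^{N-1}\ot(L_{n+1}\cdots L_{n+m})^{-1},$$
and the leftover factor $(L_{n+1}\cdots L_{n+m})^{-1}$ is not a (weakly) positive $\psi$-bundle, so neither \cite[Prop.\ 1']{Pand} (Remark \ref{Pand-van-rem}) nor \cite[Lem.\ 2.5]{KT} applies; your parenthetical ``differs from one by a $\kappa$-type twist'' is not substantiated, and since the ground field is arbitrary you also cannot fall back on Kawamata--Viehweg for a big-and-nef line bundle. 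The paper does not attempt this cohomology computation at all: it first proves that $\Spec(A_{n,m})\simeq\wt{\CC}^m_n$ is Cohen--Macaulay and normal, establishes $R\pi_{n,m*}\OO\simeq\OO_{X_{n,m}}$ as in Proposition \ref{rat-resolution-prop}, and then invokes \cite[Cor.\ 8.3]{Kovacs}, which yields the rational-resolution property (in particular $R^{>0}\pi_{n,m*}\om=0$) as a consequence, rather than as an input. So the CM/normality statement is not parallel to, but logically prior to, the resolution statement.

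Two further points in your CM/normality argument need repair. ``Freeness over a Cohen--Macaulay ring preserves the Cohen--Macaulay property'' is false as stated for non-finite extensions (every $\k$-algebra is free as a $\k$-module); what is needed, and what the paper uses, is \cite[Prop.\ 6.8.3]{EGAIV}: a flat morphism with CM fibers over a CM base has CM total space, applied inductively to the projections $\wt{\CC}^m_n\to\wt{\CC}^{m-1}_n$, whose fibers are (products of) reduced curves and hence CM. This is an easy fix, but the principle you cite is not correct. For $R_1$, your description of the fibers as ``at worst nodal, \ldots hence lci'' is wrong: $\psi$-prestable fibers have rational $m$-fold points, which for $m\ge 3$ are neither nodal nor lci; the paper instead argues by induction on $m$, restricting over a smooth open $V\sub\wt{\CC}^{m-1}_n$ with complement of codimension $\ge 2$ containing the locus of smooth curves, and observing that the relative singular locus of $\pi^{-1}(V)\to V$ has codimension $\ge 2$ in $\pi^{-1}(V)$. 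The codimension count you intend does go through, but only via such an inductive setup, not via the lci claim.
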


\begin{proof} The CM property of $\wt{\CC}^m_n$ follows from the CM property of $\wt{\CC}^0_n=\wt{\UU}_{0,n}[\psi]$ by induction on $m$,
using the flat morphisms $\wt{\CC}^m_n\to \wt{\CC}^{m-1}_n$ with CM fibers, by \cite[Prop.\ 6.8.3]{EGAIV}.
We claim that $\wt{\CC}^m_n$ is smooth in codimension $1$. Indeed, we know this for $m=0$, so we can use induction on $n$. Assume we already know that
$\wt{\CC}^{m-1}_n$ is smooth in codimension $1$, and let $V\sub \wt{\CC}^{m-1}_n$ be a smooth open subset containing the locus of smooth curves and such that its complement
has codimension $\ge 2$. Consider one of the projections $\pi:\wt{\CC}^m_n\to \wt{\CC}^{m-1}_n$.
It is enough to check that $\pi^{-1}(V)$ is smooth in codimension $1$. But this follows immediately from the fact that the locus of singular points of the projection 
$\pi^{-1}(V)\to V$ has codimension $\ge 2$ in $\pi^{-1}(V)$. By Serre's criterion we deduce that $\wt{\CC}^m_n$ are normal. 
The isomorphism $R\pi_{n,m,*}\OO_{\ov{M}_{0,n+m}}\simeq \OO_{X_n,m}$ and \eqref{Amn-Xmn-eq} are proved as in Proposition \ref{rat-resolution-prop-bis}.
By \cite[Cor.\ 8.3]{Kovacs}, this implies that $\pi_{n,m}$ is a rational resolution.
\end{proof} 

\begin{remark} It is easy to check that $\wt{\CC}_n$ is smooth in codimension $\le 3$ (by an argument similar to that of \cite[Prop.\ 1.2.1]{P-g1}),
whereas $\wt{\CC}_3$, which is $4$-dimensional, has a unique singular point corresponding to the rational $3$-fold singular point of the fiber over the origin in
 $\wt{\UU}_{0,3}[\psi]\simeq\A^3$. The spaces $\wt{\CC}_n^m$ for $m\ge 2$ are smooth in codimension $\le 2$ (this easily reduces to a similar statement for the relative cartesian
 powers of the miniversal deformation of the node). Again this is optimal since $\wt{\CC}_2^2$ is identified with the conifold singularity $xy=zt$.
\end{remark}


\end{document}